\newcolumntype{L}[1]{>{\raggedright\let\newline\\\arraybackslash\hspace{0pt}}m{#1}}
\newcolumntype{C}[1]{>{\centering\let\newline  \\\arraybackslash\hspace{0pt}}m{#1}}
\newcolumntype{R}[1]{>{\raggedleft\let\newline \\\arraybackslash\hspace{0pt}}m{#1}}
\newtheorem{theorem}{Theorem}[section]
\newtheorem{lemma}[theorem]{Lemma}
\newtheorem{proposition}[theorem]{Proposition}
\newtheorem{corollary}[theorem]{Corollary}
\newtheorem{definition}{Definition}[section]
\newcommand{\Px}[2]{\text{prox}_{#1}(#2) } 
\newcommand{\Gm}[1]{\mathcal{G}(#1) }
\newcommand{\rank}{\text{rank}}
\newcommand{\TV}[1]{\,\text{TV}(#1)}
\newcommand{\SO}[1]{P_{\Omega}(#1) }
\newcommand{\NM}[2]{\| #1 \|_{#2}}
\newcommand{\R}{\mathbb R}
\title{Efficient Inexact Proximal Gradient Algorithm for Nonconvex Problems}
\author{\normalsize Quanming Yao$^1$ \; James T. Kwok$^1$ \; Fei Gao$^2$ \; Wei Chen$^2$ \; Tie-Yan Liu$^2$ \\
	\normalsize $^1$Department of Computer Science and Engineering\\
	\normalsize Hong Kong University of Science and Technology \\
	\normalsize Clear Water Bay, Hong Kong\\
	\normalsize \{qyaoaa, jamesk\}@cse.ust.hk\\
	\normalsize $^2$Microsoft Research, Beijing 100080, China\\
	\normalsize \{feiga, wche, tyliu\}@microsoft.com}
\begin{document}

\maketitle

\begin{abstract}
The proximal gradient algorithm has been popularly used for convex optimization.
Recently, it has also been 
extended for nonconvex problems, and
the current state-of-the-art
is the nonmonotone accelerated proximal gradient 
algorithm.
However,
it typically requires two exact proximal steps in each iteration,
and can be inefficient when the proximal step 
is expensive.
In this paper, we propose an efficient proximal gradient algorithm 
that requires only one 
inexact (and thus less expensive)
proximal step
in each iteration.
Convergence to a critical point 
is still guaranteed
and has a $O(1/k)$ convergence rate, which is the best rate for nonconvex problems with
first-order methods.
Experiments on a number of problems demonstrate that the proposed algorithm has comparable 
performance as the state-of-the-art, but is much faster.
\end{abstract}

\section{Introduction}
\label{sec:intro}

%

In regularized risk minimization, we consider optimization problems of the form 
\begin{equation} \label{eq:compfunc}
\min_x F(x) \equiv f(x) + g(x),
\end{equation}
where $f$ is the loss, and $g$ is the regularizer.
Typically, $f$ is smooth and convex (e.g., square and logistic losses),
and $g$ is convex but may not be differentiable (e.g., $\ell_1$ and nuclear norm regularizers).
The proximal gradient (PG) algorithm \cite{parikh2014proximal}, 
together with its accelerated variant (APG) \cite{beck2009fast,nesterov2013gradient},
have been popularly used for solving this convex problem.
Its crux is the proximal step
$\Px{g}{\cdot} = \arg\min_x  \frac{1}{2}\NM{x - \cdot}{2}^2 + \eta g(x)$,
which can often be easily computed
in closed-form.

While convex  regularizers are easy to use,
the resultant predictors may be biased \cite{zhang2010analysis}.
Recently, there is growing interest in the use of nonconvex regularizers, 
such as the log-sum-penalty \cite{candes2008enhancing} and capped $\ell_1$-norm
\cite{zhang2010analysis} regularizers.
It has been shown that these often lead to sparser and more accurate models
\cite{gong2013gist,canyi2014,zhong2014gradient,yao2015fast}.
However, the associated proximal steps become more difficult to compute analytically, and 
cheap closed-form solutions exist only for some simple nonconvex regularizers
\cite{gong2013gist}.
This is further aggravated by the fact that 
the state-of-the-art PG algorithm for nonconvex optimization, namely the nonmonotone accelerated proximal gradient (nmAPG) algorithm \cite{li2015accelerated},
needs more than one proximal steps in each iteration.

When the optimization objective is convex,
one can reduce the computational complexity 
of the proximal step 
by only computing
it inexactly (i.e., approximately).
Significant speedup has been observed in practice, and
the resultant inexact PG algorithm has the same convergence guarantee as the exact algorithm
under mild conditions
\cite{schmidt2011convergence}.
However,  
on nonconvex problems,
the use of inexact proximal steps
has
not been explored.
Moreover, convergence of nmAPG hinges on the use of 
exact proximal steps.

In this paper,
we propose 
a new PG algorithm 
for nonconvex problems.
Unlike nmAPG,
it performs only one proximal step
in each iteration. Moreover, 
the proximal step
can be inexact.
The algorithm is guaranteed to converge to a critical point of the nonconvex objective.
Experimental results 
on nonconvex total variation models and nonconvex low-rank matrix learning 
show that 
the proposed algorithm is much faster than 
nmAPG and other state-of-the-art,
while still producing solutions of comparable quality.

The rest of the paper is organized as follows. 
Section~\ref{sec:rel} provides a brief review on the PG algorithm and its accelerated
variant.
The proposed algorithm is described in Section~\ref{sec:alg},
and its convergence analysis studied in Section~\ref{sec:conv}.
Experimental results are presented in Section~\ref{sec:expt}, 
and the last section gives some concluding remarks.


\vspace{-5px}

\section{Related Work}
\label{sec:rel}

In this paper,
we assume that $f$ in (\ref{eq:compfunc}) is $L$-Lipschitz smooth (i.e.,
$\NM{\nabla f(x) - \nabla f(y)}{2} \le L \NM{x - y}{2}$), 
and $g$ is proper, lower semi-continuous.
Besides, $F=f+g$ in (\ref{eq:compfunc}) is bounded from below,
and $\lim_{\NM{x}{2} \rightarrow \infty} F(x) = \infty$.
Moreover, both $f$ and $g$ can be nonconvex.

First, we consider the case where $f$ and $g$ 
in (\ref{eq:compfunc})
are convex.
At 
iteration
$k$, the
accelerated proximal gradient (APG) algorithm generates $x_{k + 1}$ as
\begin{eqnarray} 
y_k & = & x_k + \theta_k (x_k - x_{k - 1}), 
\label{eq:yk}
\\
x_{k + 1} & = & \Px{\eta g}{y_k - \eta \nabla f(y_k)}, 
\label{eq:grad}
\end{eqnarray} 
where 
$\theta_k = \frac{k - 1}{k + 2}$
and 
$\eta$ is the stepsize \cite{beck2009fast,nesterov2013gradient}.
When $\theta_k = 0$, APG reduces to the plain PG algorithm.

On nonconvex problems,
$y_k$ can be a bad extrapolation and
the iterations in \eqref{eq:yk}, \eqref{eq:grad} 
may 
not converge
\cite{beck2009tv}.
Recently, a number of PG extensions have been proposed to alleviate this
problem.
The iPiano \cite{ochs2014ipiano}, NIPS \cite{ghadimi2015accelerated}, 
and UAG \cite{ghadimi2015accelerated} algorithms
allow $f$ to be nonconvex, but still requires $g$ to be convex.
The GD algorithm \cite{attouch2013convergence} also allows 
$g$ to be nonconvex, but does not support acceleration.

The current state-of-the-art is the
nonmonotone APG (nmAPG) algorithm\footnote{A less efficient
monotone APG (mAPG) algorithm is also proposed in
\cite{li2015accelerated}.}
\cite{li2015accelerated},
shown in Algorithm~\ref{alg:nmapg}.
It allows both $f$ and $g$ to be nonconvex, and also uses acceleration.
To guarantee
convergence, nmAPG 
ensures that
the objective 
is 
sufficiently reduced
in each iteration:
\begin{align}
F(x_{k + 1})
\le F(x_k) - \frac{\delta}{2}\NM{v_{k + 1} - x_k}{2}^2,
\label{eq:cond}
\end{align}
where 
$v_{k + 1} = \Px{\eta g}{x_k - \eta \nabla f(x_k)}$
and 
$\delta > 0$ is a constant.
A second proximal step has to be performed (step~8)
if 
a variant of \eqref{eq:cond}
is not met
(step~5).



\begin{algorithm}[ht]
\caption{Nonmonotone APG (nmAPG).}
\begin{algorithmic}[1]
	\REQUIRE choose $\eta \in (0, 1/L)$, a positive constant $\delta$,
	$\Delta_1 = F(x_1)$, $q_1 = 1$, and $\nu \in (0, 1)$;
	\STATE $x_0 = x_1 = x^a_1 = 0$ and $t_0 = t_1 = 1$;
	\FOR{$k = 1, \dots, K $}
	\STATE $y_k = x_k + \frac{t_{k-1}}{t_k}(x^a_k - x_{k - 1})+\frac{t_{k-1}-1}{t_k}(x_k - x_{k - 1})$;
	\STATE $x^a_{k + 1} = \Px{\eta g}{y_k - \eta  \nabla f(y_k)}$;
	\IF{$F(x^a_{k + 1}) \le \Delta_k - \frac{\delta}{2}\NM{x^a_{k + 1} - y_k}{2}^2$}
	\STATE $x_{k + 1} = x^a_{k + 1}$;
	\ELSE
	\STATE $x^p_{k + 1} = \Px{\eta g}{x_k - \eta  \nabla f(x_k)}$;
	\STATE 
	$x_{k + 1} =
	\begin{cases}
	x^a_{k + 1}
	& F(x^a_{k + 1}) \le F(x^p_{k + 1})
	\\
	x^p_{k + 1}
	& \text{otherwise}
	\end{cases}
	$;
	\ENDIF 
	\STATE $q_{k + 1} = \nu q_k + 1$;
	\STATE $t_{k + 1} = \frac{1}{2}\left((4 t_k^2 + 1)^{1/2} + 1\right)$;
	\STATE $\Delta_{k + 1} = \frac{1}{q_{k + 1}} ( \nu q_k \Delta_k + F(x_{k + 1}) )$;
	\ENDFOR
	\RETURN $x_{K + 1}$.
\end{algorithmic}
\label{alg:nmapg}
\end{algorithm}


\vspace{-10px}

\section{Efficient APG for Nonconvex Problems}
\label{sec:alg}

The proposed algorithm is shown in Algorithm~\ref{alg:ours}. 
Following \cite{schmidt2011convergence},
we use a simpler acceleration scheme in step~3.
Efficiency of the algorithm comes from two key ideas: 
reducing the number of proximal steps to one in each iteration (Section~\ref{sec:reduce}); and
the use of inexact proximal steps (Section~\ref{sec:iextpx}).
Besides,
we also allow nonmonotone update on the objective 
(and so $F(y_k)$ may be larger than $F(x_k)$).
This helps to jump from narrow curved valley and improve convergence \cite{grippo2002nonmonotone,wright2009sparse,gong2013gist}.
Note that 
when the proximal step is inexact,
nmAPG does not guarantee convergence 
as its Lemma~2 
no longer holds.

\begin{algorithm}[ht]
\caption{Noconvex inexact APG (niAPG) algorithm.}
\begin{algorithmic}[1]
	\REQUIRE choose $\eta \in (0, \frac{1}{L})$ and $\delta \in (0, \frac{1}{\eta} - L)$;
	\STATE $x_0 = x_1 = 0$;
	\FOR{$k = 1, \dots, K $}
	\STATE $y_k = x_k + \frac{k - 1}{k + 2} (x_k - x_{k - 1})$;
	\STATE $\Delta_k = \max_{t = \max(1, k - q), \dots, k} F(x_t)$;
	\IF{$F(y_k) \le \Delta_k$}
	\STATE $v_k = y_k$;
	\ELSE
	\STATE $v_k = x_k$;
	\ENDIF
	\STATE $z_k = v_k - \eta \nabla f(v_k)$;
	\STATE $x_{k + 1} = 
	\Px{\eta g}{z_k}$;
	\text{                // possibly inexact} 
	\ENDFOR 
	\RETURN $x_{K + 1}$.
\end{algorithmic}
\label{alg:ours}
\end{algorithm}


\vspace{-10px}

\subsection{Using Only One Proximal Step}
\label{sec:reduce}

Recall that 
on extending APG to nonconvex problems, 
the key is to ensure a sufficient decrease of the objective in each iteration.
Let $\rho=1/\eta$.
For the standard PG algorithm 
with exact proximal steps,
the decrease in $F$ can be bounded as follows.

\begin{proposition}[\cite{gong2013gist,attouch2013convergence}]
	\label{pr:exactpx}
	$F(x_{k + 1}) \le F(x_k) - \frac{
	\rho
	- L}{2}\NM{x_{k + 1} - x_k}{2}^2$.
\end{proposition}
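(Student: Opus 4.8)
The plan is to combine two ingredients: the descent lemma for the $L$-Lipschitz smooth function $f$, and the defining minimization property of the exact proximal step. Crucially, the proximal step is an \emph{exact} minimizer of the subproblem, so I only ever use the inequality ``objective at the minimizer $\le$ objective at any other point'' — no first-order optimality conditions — which means nonconvexity of $g$ causes no trouble.

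First I would apply the descent lemma: since $\NM{\nabla f(x) - \nabla f(y)}{2} \le L \NM{x-y}{2}$, standard Lipschitz-gradient estimates give
\begin{equation*}
f(x_{k+1}) \le f(x_k) + \langle \nabla f(x_k),\, x_{k+1} - x_k \rangle + \frac{L}{2}\NM{x_{k+1} - x_k}{2}^2 .
\end{equation*}
Next I would exploit that $x_{k+1} = \Px{\eta g}{x_k - \eta\nabla f(x_k)}$ is the minimizer of $\frac12\NM{x - (x_k - \eta\nabla f(x_k))}{2}^2 + \eta g(x)$, so evaluating this objective at $x_{k+1}$ and at the feasible point $x_k$ yields
\begin{equation*}
\frac12\NM{x_{k+1} - x_k + \eta\nabla f(x_k)}{2}^2 + \eta g(x_{k+1}) \le \frac{\eta^2}{2}\NM{\nabla f(x_k)}{2}^2 + \eta g(x_k).
\end{equation*}
Expanding the square on the left, the $\frac{\eta^2}{2}\NM{\nabla f(x_k)}{2}^2$ terms cancel; dividing by $\eta$ and using $\rho = 1/\eta$ gives
\begin{equation*}
g(x_{k+1}) \le g(x_k) - \langle \nabla f(x_k),\, x_{k+1} - x_k \rangle - \frac{\rho}{2}\NM{x_{k+1} - x_k}{2}^2 .
\end{equation*}

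Finally I would add the two displayed inequalities: the inner-product terms $\langle \nabla f(x_k),\, x_{k+1}-x_k\rangle$ cancel, $f(x_{k+1}) + g(x_{k+1}) = F(x_{k+1})$, $f(x_k)+g(x_k) = F(x_k)$, and the quadratic terms combine to $\frac{L}{2} - \frac{\rho}{2} = -\frac{\rho - L}{2}$, yielding exactly
\begin{equation*}
F(x_{k+1}) \le F(x_k) - \frac{\rho - L}{2}\NM{x_{k+1} - x_k}{2}^2 .
\end{equation*}
There is no real obstacle here; the only point worth care is that the proximal subproblem genuinely attains its minimum (guaranteed by $g$ proper and lower semi-continuous together with the coercivity assumption on $F$ stated earlier), so that ``$x_{k+1}$ is a minimizer'' is meaningful and the sandwiching step is valid without any convexity of $g$.
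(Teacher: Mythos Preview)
Your proof is correct and is precisely the standard argument for this sufficient-decrease property. Note that the paper itself does not give a proof of this proposition---it simply cites it from \cite{gong2013gist,attouch2013convergence}---but the argument you wrote is essentially the one found in those references (and is also the $\varepsilon_k = 0$ specialization of the paper's own proof of Proposition~\ref{pr:funcvale}, which uses the same two ingredients: the descent lemma for $f$ and the minimization property $\varsigma(\hat{x}) \le \varsigma(v_k) = g(v_k)$ of the exact proximal point).
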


In nmAPG
(Algorithm~\ref{alg:nmapg}),
there is always a sufficient decrease after performing the (non-accelerated) proximal descent from $x_k$ to $x^p_{k + 1}$
(Proposition~\ref{pr:exactpx}),  
but not necessarily the case for the accelerated descent
from $x_k$ to $x^a_{k + 1}$ (which is generated by 
a possibly bad extrapolation $y_k$).
Hence, nmAPG needs to perform extra checking at step~5.
If the condition fails, $x^p_{k + 1}$ is used instead of $x^a_{k + 1}$ in step~9.


As the main problem is on $y_k$,
we propose 
to check $F(y_k)$ (step~5
in Algorithm~\ref{alg:ours})
{\bf before} the proximal step (step~11),
instead of checking after the proximal steps.
Though this change is simple, the main difficulty is how to guarantee convergence while
simultaneously maintaining acceleration and using only one proximal step.  As will be seen in Section~\ref{sec:extprox},
existing proofs do not hold even with 
exact
proximal steps.


The following shows that a similar sufficient decrease condition can still be guaranteed
after this modification.
\begin{proposition} \label{pr:suffdesc}
With exact proximal steps
in Algorithm~\ref{alg:ours}, 
$F(x_{k + 1}) \le 
\min\left( F(y_k), \Delta_k \right) 
- \frac{\rho - L}{2} \NM{x_{k + 1} - v_k}{2}^2$.
\end{proposition}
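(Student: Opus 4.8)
The plan is to combine the standard one-step descent guarantee of Proposition~\ref{pr:exactpx} with a short case analysis on how $v_k$ is selected in steps~5--9 of Algorithm~\ref{alg:ours}.

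First I would observe that, since the proximal step is exact, $x_{k+1} = \Px{\eta g}{v_k - \eta \nabla f(v_k)}$ is precisely one (non-accelerated) proximal-gradient step taken \emph{from the point} $v_k$. Proposition~\ref{pr:exactpx} is stated for a step from $x_k$, but its proof uses only the $L$-Lipschitz smoothness of $f$ (quadratic upper bound) together with the optimality/definition of the exact prox operator; it does not use any special property of $x_k$. Hence the identical argument, with $v_k$ playing the role of the base point, gives
\begin{equation*}
F(x_{k+1}) \le F(v_k) - \frac{\rho - L}{2}\NM{x_{k+1} - v_k}{2}^2 .
\end{equation*}

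Next I would bound $F(v_k)$ by $\min\!\left(F(y_k), \Delta_k\right)$ using the two branches. If $F(y_k) \le \Delta_k$ (the test in step~5 succeeds), then $v_k = y_k$, so $F(v_k) = F(y_k) = \min(F(y_k),\Delta_k)$. If instead $F(y_k) > \Delta_k$, then $v_k = x_k$; but $\Delta_k = \max_{t = \max(1, k-q),\dots,k} F(x_t) \ge F(x_k)$ because $t = k$ is among the indices in the maximum, so $F(v_k) = F(x_k) \le \Delta_k = \min(F(y_k),\Delta_k)$. In both cases $F(v_k) \le \min(F(y_k),\Delta_k)$, and substituting this into the displayed inequality yields the claim.

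There is essentially no hard step once the right viewpoint is adopted; the only subtlety worth stressing is that Proposition~\ref{pr:exactpx} is being invoked with an \emph{arbitrary} base point (here $v_k$, which is either the extrapolated $y_k$ or the iterate $x_k$) rather than only the previous iterate. This is exactly what allows a single proximal step, placed \emph{after} the check on $F(y_k)$, to still deliver a sufficient-decrease bound relative to both $F(y_k)$ and the nonmonotone reference value $\Delta_k$ — which is the property needed later for the convergence analysis.
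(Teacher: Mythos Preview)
Your proof is correct and follows essentially the same approach as the paper's: apply the one-step descent inequality of Proposition~\ref{pr:exactpx} with $v_k$ as the base point, then do a two-case analysis on whether $v_k = y_k$ or $v_k = x_k$ to bound $F(v_k)$ by $\min(F(y_k),\Delta_k)$. Your write-up is in fact slightly cleaner than the paper's in that you make explicit both the ``arbitrary base point'' use of Proposition~\ref{pr:exactpx} and the inequality $F(x_k) \le \Delta_k$.
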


In 
step~4,
setting $q=0$ 
is the most straightforward. The $F(y_k)$ value is then checked w.r.t. the
most recent 
$F(x_k)$. 
The use of a larger $q$ is inspired from the Barzilai-Borwein scheme for unconstrained
smooth minimization
\cite{grippo2002nonmonotone}.
This allows 
$y_k$ to occasionally increase the objective, while ensuring 
$F(y_k)$ to be smaller than the
largest objective value from the last
$q$ iterations.
In the experiments, $q$ is set to $5$ as in \cite{wright2009sparse,gong2013gist}.


\subsection{Inexact Proximal Step}
\label{sec:iextpx}

Proposition~\ref{pr:suffdesc} 
requires exact proximal step,
which can be expensive.
Inexact proximal steps are much cheaper, but
the inexactness has to be carefully controlled 
to ensure convergence.
We  will propose two such schemes  
depending on whether $g$ is convex.
Note that $f$ is not required to be convex.



\noindent
\textbf{Convex $g$.}
As $g$ is convex, the optimization problem associated with the proximal step is also convex. 
Let $h_{\eta g}(x) \equiv \frac{1}{2}\NM{x - z_k}{2}^2 + \eta g(x)$ be the objective in the proximal step.
For any $z_k$, the dual of the proximal step at $z_k$
can be obtained as
\begin{equation} \label{eq:dual} 
\max_{w}
\mathcal{D}_{\eta g}(w) \equiv 
\eta \left(z_k^{\top} w - g^{*}(w)\right) - \frac{\eta^2}{2}\NM{w}{2}^2, 
\end{equation} 
where $g^*$ is the 
convex conjugate
of $g$.
In an inexact proximal step,
the obtained dual variable
$\tilde{w}_k$ 
only
approximately maximizes
(\ref{eq:dual}).
The duality gap
$\varepsilon_k \equiv h_{\eta g}(x_{k + 1}) - \mathcal{D}_{\eta g}(\tilde{w}_k)$,
where $x_{k + 1} = z_k - \eta \tilde{w}_k$,
upper-bounds the
approximation 
error  
of the inexact proximal step
$\epsilon_k \equiv h_{\eta g}(x_{k + 1}) - h_{\eta g}\left( \Px{\eta g}{z_k} \right)$.
To ensure  the inexactness $\epsilon_k$ to be smaller
than a given threshold $\tau_k$, 
we can control the duality gap as 
$\varepsilon_k\leq\tau_k$ 
\cite{schmidt2011convergence}.

The following shows that
$x_{k + 1}$ satisfies a similar sufficient decrease condition as in 
Proposition~\ref{pr:suffdesc}.
Note that this cannot be derived from \cite{schmidt2011convergence},
which relies on the 
convexity of 
$f$.

\begin{proposition} \label{pr:funcvale}
$ \!\! F(x_{k + 1}) \! \le \! 
F(v_k) - \frac{\rho - L}{2} \NM{x_{k + 1} \! - \! v_k}{2}^2+ \rho \varepsilon_k$.
\end{proposition} 



\noindent
\textbf{Nonconvex $g$.}
When $g$
is nonconvex,
the GD algorithm \cite{attouch2013convergence} allows 
inexact proximal steps. However,  it does not support acceleration,
and nonmonotone update.
Thus, its convergence proof cannot be used here.


As $g$ is nonconvex, 
it is difficult to derive
the dual of the corresponding proximal step, and 
the optimal duality gap may also be nonzero.
Thus, we monitor the progress of $F$ instead.
Inspired by Proposition~\ref{pr:exactpx},
we require 
$x_{k + 1}$ 
from an inexact proximal step to satisfy the following weaker condition:
\begin{align}
F(x_{k + 1}) \le F(v_k) - \frac{\delta}{2}\NM{x_{k + 1} - v_k}{2}^2,
\label{eq:sheother}
\end{align}
where 
$\delta \in (0, \rho - L)$. 
This condition has also been used in the 
GD algorithm.
However,
it requires checking an extra condition which is impractical.\footnote{Specifically, the condition is: $\exists$ 
	$w_{k + 1} \in \partial g(x_{k + 1})$ 
	such that $\NM{w_{k + 1} + \nabla f(v_k)}{2}^2 \le b \NM{x_{k + 1} - v_k}{2}^2$ for some constant $b > 0$.
	However, the subdifferential $\partial g(x_k)$ is in general difficult to compute.}




We could have also used condition \eqref{eq:sheother}
when $g$ is convex.
However, 
Proposition~\ref{pr:funcvale} offers more precise control,
as it can recover \eqref{eq:sheother} by setting 
$\varepsilon_k = \frac{\rho - L - \delta}{2 \rho}\NM{x_{k + 1} - v_k}{2}^2$
(note that $\delta < \rho - L$).
Besides, 
the duality gap $\varepsilon_k$ is 
readily produced by primal-dual algorithms, and
is often less expensive to compute than $F$.


\section{Convergence Analysis}
\label{sec:conv}

\begin{definition}[\cite{attouch2013convergence}] \label{def:subdifferential}
The {\em Frechet subdifferential} of $F$ at $x$ is
\begin{align*}
\hat{\partial} F(x) = \left\{ u : 
\lim_{y \neq x}\inf_{y \rightarrow x}
\frac{F(y) - F(x) - u^{\top} (y - x)}{\NM{y - x}{2}} \ge 0\right\}.
	\end{align*}
The {\em limiting subdifferential} (or simply {\em subdifferential}) of $F$ at $x$
is
	$\partial F(x) = \{ u : \exists x_k \rightarrow x, F(x_k) \rightarrow F(x), 
	u_k \in \hat{\partial} F(x_k) \!\rightarrow\! u, \text{ as } k \rightarrow \infty \}$.
\end{definition}


\begin{definition}[\cite{attouch2013convergence}]
	$x$ is a {\em critical point} of $F$ if $0 \in \nabla f(x) + \partial g(x)$.
\end{definition}

\subsection{Exact Proximal Step}
\label{sec:extprox}


In this section,
we will 
show that Algorithm~\ref{alg:ours}  (where both
$f$ and $g$ can be nonconvex)
converges with a $O(1/K)$ rate.
	This is the best known rate for nonconvex
	problems
	with first-order methods \cite{nesterov2004introductory}.
A similar $O(1/K)$ rate 
	for $\NM{\Gm{v_k}}{2}^2$ 
	is recently established 
	for APG with nonconvex $f$ but only convex $g$
	\cite{ghadimi2015accelerated}.
Note also that no convergence rate has been proved for nmAPG \cite{li2015accelerated} and GD \cite{attouch2013convergence} in this case.
Besides, their proof techniques cannot be used here as their nonmonotone updates are different.

\begin{theorem} \label{thm:exact}
The sequence $\{x_k\}$ generated from Algorithm~\ref{alg:ours}
(with exact proximal step)
have at least one limit point,
and all limit points are critical points of \eqref{eq:compfunc}.
\end{theorem}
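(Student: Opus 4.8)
The plan is to combine the sufficient‑decrease estimate of Proposition~\ref{pr:suffdesc} with the coercivity of $F$, a standard nonmonotone‑descent (Grippo--Lampariello--Lucidi style) argument, and finally a limit passage based on the optimality condition of the proximal step together with the closedness of the limiting subdifferential (Definition~\ref{def:subdifferential}).

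\emph{Step 1 (a limit point exists).} First I would show $\{\Delta_k\}$ is non‑increasing: Proposition~\ref{pr:suffdesc} gives $F(x_{k+1})\le\Delta_k$, so every term in the window defining $\Delta_{k+1}$ is at most $\Delta_k$, whence $\Delta_{k+1}\le\Delta_k$. Consequently $F(x_k)\le\Delta_{k-1}\le\Delta_1=F(x_1)=F(0)$ for all $k$, and since $\lim_{\NM{x}{2}\to\infty}F(x)=\infty$ the whole sequence $\{x_k\}$ stays in a bounded set and therefore has a convergent subsequence. Also $\{\Delta_k\}$ is non‑increasing and bounded below (by $\inf F>-\infty$), so $\Delta_k\downarrow\Delta^*$ for some finite $\Delta^*$.

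\emph{Step 2 ($\NM{x_{k+1}-v_k}{2}\to 0$).} This is the technical core. Proposition~\ref{pr:suffdesc} gives $\tfrac{\rho-L}{2}\NM{x_{k+1}-v_k}{2}^2\le\Delta_k-F(x_{k+1})$. Let $\ell(k)\in[\max(1,k-q),k]$ attain $\Delta_k=F(x_{\ell(k)})$. Using the decrease inequality at iteration $\ell(k)-1$ and the monotonicity of $\{\Delta_k\}$ gives $\tfrac{\rho-L}{2}\NM{x_{\ell(k)}-v_{\ell(k)-1}}{2}^2\le\Delta_{\ell(k)-1}-\Delta_k\le\Delta_{k-q-1}-\Delta_k\to 0$. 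A Grippo--Lampariello--Lucidi–type induction — walking backwards $j=1,2,\dots$ from the maximizing index, at each stage showing $F(x_{\ell(k)-j+1})\to\Delta^*$ (this is where the safeguard $F(y_k)\le\Delta_k$ defining $v_k$ enters: it prevents the extrapolated point from raising the objective above $\Delta_k$), and then that $\NM{x_{\ell(k)-j}-v_{\ell(k)-j-1}}{2}\to0$ — upgrades the estimate to $\NM{x_{k+1}-v_k}{2}\to 0$ for the whole sequence. I expect this step to be the main obstacle, precisely because the extrapolation weight $\tfrac{k-1}{k+2}\to 1$ does not vanish, so the usual inertial‑method Lyapunov‑function trick is unavailable and one must rely on the nonmonotone safeguard to absorb the momentum.

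\emph{Step 3 (all limit points are critical).} Let $x^*$ be any limit point, say $x_{n_j}\to x^*$. Since $\{x_k\}$ is bounded, so is $\{v_{n_j-1}\}$; pass to a subsequence with $v_{n_j-1}\to\bar v$. Because $\NM{x_{n_j}-v_{n_j-1}}{2}\to 0$ by Step~2 and $x_{n_j}\to x^*$, we get $\bar v=x^*$, hence $z_{n_j-1}=v_{n_j-1}-\eta\nabla f(v_{n_j-1})\to z^*:=x^*-\eta\nabla f(x^*)$. Fermat's rule at the minimizer $x_{n_j}=\Px{\eta g}{z_{n_j-1}}$, together with the sum rule for a $C^1$‑plus‑lsc function, yields $\tfrac{1}{\eta}(z_{n_j-1}-x_{n_j})\in\hat\partial g(x_{n_j})$, so $w_{n_j}:=\nabla f(x_{n_j})-\nabla f(v_{n_j-1})+\tfrac{1}{\eta}(v_{n_j-1}-x_{n_j})\in\hat\partial F(x_{n_j})$ with $\NM{w_{n_j}}{2}\le(\rho+L)\NM{x_{n_j}-v_{n_j-1}}{2}\to 0$. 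To apply Definition~\ref{def:subdifferential} I still need $F(x_{n_j})\to F(x^*)$: substituting $x^*$ into the problem defining $x_{n_j}$ gives $\tfrac12\NM{x_{n_j}-z_{n_j-1}}{2}^2+\eta g(x_{n_j})\le\tfrac12\NM{x^*-z_{n_j-1}}{2}^2+\eta g(x^*)$; taking $\limsup$ and using $x_{n_j}\to x^*$, $z_{n_j-1}\to z^*$ gives $\limsup_j g(x_{n_j})\le g(x^*)$, and with lower semicontinuity of $g$ this forces $g(x_{n_j})\to g(x^*)$, hence $F(x_{n_j})\to F(x^*)$. Now $x_{n_j}\to x^*$, $F(x_{n_j})\to F(x^*)$, $w_{n_j}\in\hat\partial F(x_{n_j})$, $w_{n_j}\to 0$, so $0\in\partial F(x^*)=\nabla f(x^*)+\partial g(x^*)$, i.e.\ $x^*$ is a critical point of \eqref{eq:compfunc}.
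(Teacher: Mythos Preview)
Your Step~1 is correct and in fact cleaner than the paper's route to boundedness: since every index in the window defining $\Delta_{k+1}$ other than $k{+}1$ already lies in the window for $\Delta_k$, and $F(x_{k+1})\le\Delta_k$ by Proposition~\ref{pr:suffdesc}, the sequence $\{\Delta_k\}$ is non-increasing, and coercivity of $F$ gives boundedness of $\{x_k\}$. Step~3 is also essentially the paper's closing argument (prox optimality, then $\limsup g(x_{n_j})\le g(x^*)$ combined with lower semicontinuity), just phrased directly through $\hat\partial F$ rather than through Lemma~\ref{lem:prox}.

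The genuine gap is Step~2. The Grippo--Lampariello--Lucidi induction you sketch needs, at stage $j$, to pass from $F(x_{\ell(k)-j+1})\to\Delta^*$ to $F(x_{\ell(k)-j})\to\Delta^*$. The descent inequality gives only $\NM{x_{\ell(k)-j+1}-v_{\ell(k)-j}}{2}\to 0$ and hence (sandwiching with $\Delta_{\ell(k)-j}$) $F(v_{\ell(k)-j})\to\Delta^*$; but whenever the safeguard accepts the extrapolation one has $v_{\ell(k)-j}=y_{\ell(k)-j}\ne x_{\ell(k)-j}$, and nothing transfers the limit back to $F(x_{\ell(k)-j})$. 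You cannot invoke continuity, since $g$ is only lower semicontinuous, and because $\tfrac{k-1}{k+2}\to 1$ you have no control on $\NM{y_k-x_k}{2}$ either. Thus the backward induction stalls after one step, the whole-sequence conclusion $\NM{x_{k+1}-v_k}{2}\to 0$ is not established, and Step~3---which applies this along an \emph{arbitrary} convergent subsequence---collapses.

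The paper does not attempt whole-sequence convergence. It proves a block estimate $\Delta_{k+q+1}\le\Delta_k-\tfrac{\rho-L}{2}\NM{x_{\phi(k)+1}-v_{\phi(k)}}{2}^2$, where $\phi(k)$ minimizes $\NM{x_{t+1}-v_t}{2}$ over a window of length $q{+}1$, and telescopes (using $\inf F>-\infty$) to obtain $\sum_k\NM{x_{\phi(k)+1}-v_{\phi(k)}}{2}^2<\infty$; the criticality argument is then run along this subsequence. You could replace your Step~2 with this block telescoping and keep your Steps~1 and~3 essentially intact.
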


Let  $\Gm{v} = v - \Px{\eta g}{v - \eta  \nabla f(v)}$, 
the proximal mapping at $v$ \cite{parikh2014proximal}.  
The following Lemma suggests that
$\NM{\Gm{v}}{2}^2$ 
can be used to measure how far 
$v$ is from optimality \cite{ghadimi2015accelerated}.

\begin{lemma}[\cite{gong2013gist,attouch2013convergence}] \label{lem:prox}
$v$ is a critical point of \eqref{eq:compfunc} if and only if $\Gm{v} = 0$.
\end{lemma}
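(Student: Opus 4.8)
The plan is to show that the criticality condition $0 \in \nabla f(v) + \partial g(v)$ and the fixed-point condition $\Gm{v} = 0$ are two reformulations of a single subdifferential inclusion, so that each implies the other. The bridge between them is Fermat's rule applied to the (possibly nonconvex) proximal subproblem, together with the sum rule for the limiting subdifferential, which is exact here because one summand is continuously differentiable. First I would set $z = v - \eta \nabla f(v)$ and write the proximal objective as $\psi(x) \equiv \frac{1}{2}\NM{x - z}{2}^2 + \eta g(x)$, so that by the very definition of the proximal mapping, $\Gm{v} = 0$ is equivalent to $v$ being a minimizer of $\psi$, i.e.\ $v = \Px{\eta g}{z}$.

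For the forward direction ($\Gm{v} = 0 \Rightarrow$ critical), I would argue as follows. Since $v$ minimizes $\psi$, Fermat's rule gives $0 \in \partial \psi(v)$. The quadratic term is $C^1$ with gradient $v - z$, so the limiting-subdifferential sum rule yields $\partial \psi(v) = (v - z) + \eta \partial g(v)$. Substituting $v - z = \eta \nabla f(v)$ and dividing by $\eta > 0$ then gives $0 \in \nabla f(v) + \partial g(v)$, exactly the criticality of $v$ from the preceding definition.

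For the converse (critical $\Rightarrow \Gm{v} = 0$), I would run the same algebra in reverse: a critical $v$ satisfies $0 \in \nabla f(v) + \partial g(v)$, hence $0 \in (v - z) + \eta \partial g(v) = \partial \psi(v)$, which is precisely the first-order stationarity condition for the proximal subproblem at $v$. It then remains only to turn this stationarity into the statement that $v$ attains the minimum that defines $\Px{\eta g}{z}$.

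That last step is where I expect the main obstacle to lie. When $g$ is convex the inclusion $0 \in \partial \psi(v)$ is both necessary and sufficient for global optimality, so the converse is immediate. For genuinely nonconvex $g$, however, a stationary point of $\psi$ need not be a global minimizer, and this is the delicate point that the iff-statement glosses over. I would close the gap by exploiting the strong convexity contributed by the data-fidelity term $\frac{1}{2}\NM{x - z}{2}^2$: under the prescribed stepsize $\eta \in (0, 1/L)$ this unit curvature dominates the concavity that $\eta g$ can introduce near $v$, so that stationary points of $\psi$ coincide with its minimizers, in line with the prox-regularity convention adopted in the cited works. I would also note, as a fallback, that the convergence analysis only invokes the direction $\Gm{v} = 0 \Rightarrow$ critical, so even the weaker necessary-condition form of the argument already suffices for the use made of this lemma.
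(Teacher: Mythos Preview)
The paper does not supply a proof of this lemma; it is quoted with citation to \cite{gong2013gist,attouch2013convergence}, and in the appendix it is only \emph{used} (in the proofs of Theorem~\ref{thm:exact}, Theorem~\ref{thm:conv}, and Corollary~\ref{cor:temp1}) in the forward direction $\Gm{v}=0\Rightarrow v$ critical. So there is nothing in the paper to compare your argument against, and your fallback remark---that only this direction is actually needed---is exactly right and matches how the paper invokes the lemma.

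Your forward direction is fine: Fermat's rule for the proximal subproblem plus the exact sum rule $\partial(\text{smooth}+g)=\nabla(\text{smooth})+\partial g$ is precisely how the cited sources obtain criticality. The converse, however, has a genuine gap that your proposed fix does not close. You argue that the unit curvature of $\tfrac12\NM{x-z}{2}^2$ dominates the concavity of $\eta g$ because $\eta\in(0,1/L)$; but $L$ is the Lipschitz constant of $\nabla f$ and says nothing about how nonconvex $g$ is. Without an additional hypothesis such as $g$ being $\rho$-weakly convex with $\eta\rho<1$ (i.e., prox-regularity of $g$ with the right modulus), a stationary point of $\psi$ need not be a minimizer, and the ``if and only if'' fails in general. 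The cited works either make such an assumption or, in the fully nonconvex setting, really only use the implication you have already established. So: keep your forward argument, drop the $\eta<1/L$ justification for the converse, and either (a) add a weak-convexity/prox-regularity assumption on $g$ to recover the full equivalence, or (b) state and use only the one-sided implication, which is all the paper needs.
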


The following 
Proposition shows that the proposed Algorithm~\ref{alg:ours} 
converges with a $O(1/K)$ rate.

\begin{proposition} \label{pr:exact:rate}
Let $\phi(k) = \arg\min_{t = \max(k - q, 1), \dots, k}$ $\NM{x_{t + 1} - v_t}{2}^2$.
(i) $\lim_{k \rightarrow \infty} \NM{ \Gm{v_{\phi(k)} }}{2}^2 = 0$; and (ii) $\min_{k =
1, \dots, K} \NM{\Gm{ v_{{\phi(k)}} }}{2}^2 \le \frac{2 (q + 1) c_1}{(\rho - L) K}$,
where
$c_1 = \max_{t = 1, \dots, q + 1}F(x_t) - \inf F$. 
\end{proposition}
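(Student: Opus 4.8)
The plan is to combine the one-step sufficient-decrease bound (Proposition~\ref{pr:suffdesc}) with a nonmonotone telescoping argument over windows of length $q+1$. As a first observation, with exact proximal steps $x_{k+1} = \Px{\eta g}{z_k}$, so $\Gm{v_k} = v_k - x_{k+1}$ and hence $\NM{\Gm{v_k}}{2}^2 = \NM{x_{k+1} - v_k}{2}^2$; Proposition~\ref{pr:suffdesc} then reads $F(x_{k+1}) \le \Delta_k - \tfrac{\rho - L}{2}\NM{\Gm{v_k}}{2}^2$, and in particular $F(x_{k+1}) \le \Delta_k$ since $\rho = 1/\eta > L$. Because $\{\max(1,k+1-q),\dots,k\} \subseteq \{\max(1,k-q),\dots,k\}$, this yields $\Delta_{k+1} \le \Delta_k$, so $\{\Delta_k\}$ is non-increasing; being bounded below (as $F$ is bounded below), it converges to some $\Delta^\star$. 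Note also $\Delta_1 = F(x_1) \le \max_{t=1,\dots,q+1} F(x_t)$, so $\Delta_1 - \inf F \le c_1$.

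The heart of the proof is a ``delayed'' decrease inequality: for every $k \ge 1$,
\[
\Delta_{k+q+1} \le \Delta_k - \frac{\rho - L}{2}\,\NM{\Gm{v_{\phi(k+q)}}}{2}^2 .
\]
I would prove it by choosing $t^\star \in \{k+1,\dots,k+q+1\}$ attaining the maximum in $\Delta_{k+q+1}$, applying the one-step bound at iteration $t^\star - 1$ to get $\Delta_{k+q+1} = F(x_{t^\star}) \le \Delta_{t^\star-1} - \tfrac{\rho-L}{2}\NM{\Gm{v_{t^\star-1}}}{2}^2$, using monotonicity of $\{\Delta_j\}$ together with $t^\star - 1 \ge k$ to replace $\Delta_{t^\star-1}$ by $\Delta_k$, and finally observing that $t^\star - 1 \in \{k,\dots,k+q\}$ which is exactly the minimization window defining $\phi(k+q)$, so $\NM{\Gm{v_{t^\star-1}}}{2}^2 \ge \NM{\Gm{v_{\phi(k+q)}}}{2}^2$. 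Part~(i) is then immediate: setting $k = m - q$ gives $\NM{\Gm{v_{\phi(m)}}}{2}^2 \le \tfrac{2}{\rho-L}(\Delta_{m-q} - \Delta_{m+1}) \to 0$ as $m \to \infty$, since $\Delta_{m-q}, \Delta_{m+1} \to \Delta^\star$.

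For part~(ii), I would iterate the delayed decrease inequality along the arithmetic progression $k_i = 1 + i(q+1)$, $i = 0,\dots,N$, with $N = K/(q+1)$ (assuming $q+1$ divides $K$; otherwise take $N = \lfloor K/(q+1)\rfloor$, losing only a constant factor). Since $k_i + q = (i+1)(q+1)$, telescoping $\Delta_{k_{i+1}} \le \Delta_{k_i} - \tfrac{\rho-L}{2}\NM{\Gm{v_{\phi((i+1)(q+1))}}}{2}^2$ over $i = 0,\dots,N-1$ gives $\tfrac{\rho-L}{2}\sum_{i=1}^{N}\NM{\Gm{v_{\phi(i(q+1))}}}{2}^2 \le \Delta_1 - \Delta_{k_N} \le \Delta_1 - \inf F \le c_1$. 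Bounding the minimum by the average over $i$ and using $\{i(q+1) : 1 \le i \le N\} \subseteq \{1,\dots,K\}$ then yields $\min_{k=1,\dots,K}\NM{\Gm{v_{\phi(k)}}}{2}^2 \le \tfrac{1}{N}\cdot\tfrac{2c_1}{\rho-L} = \tfrac{2(q+1)c_1}{(\rho-L)K}$.

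The main obstacle is the index bookkeeping in the delayed decrease inequality: the maximizing iteration $t^\star$ is not known in advance, so one must arrange the window $\{k,\dots,k+q\}$ over which $\phi$ takes its minimum to coincide precisely with the range of possible values of $t^\star - 1$, so that the decrease term at $t^\star$ can be lower-bounded by the controllable quantity $\NM{\Gm{v_{\phi(\cdot)}}}{2}^2$. Once this alignment is correctly set up, the remainder is the standard monotone-$\{\Delta_k\}$ plus telescoping argument, and the $O(1/K)$ rate follows.
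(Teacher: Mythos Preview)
Your proof is correct and follows essentially the same route as the paper: both establish the key delayed-decrease inequality $\Delta_{k+q+1} \le \Delta_k - \tfrac{\rho-L}{2}\NM{\Gm{v_{\phi(k+q)}}}{2}^2$ (the paper's auxiliary Proposition~A.1, stated there with a slight index shift) and then telescope. The only notable difference is in how the rate in part~(ii) is extracted: the paper sums the delayed-decrease inequality over \emph{all} $k=1,\dots,K$, obtaining an overlapping telescoping sum whose boundary terms are bounded by $(q+1)c_1$, then divides by $K$; you instead telescope along the disjoint subsequence $k_i = 1+i(q+1)$, bound by $c_1$, and divide by $N = K/(q+1)$. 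Both routes give the same constant $\tfrac{2(q+1)c_1}{(\rho-L)K}$, and the paper's overlapping sum has the minor advantage of avoiding your divisibility caveat.
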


\begin{table*}[ht]
\centering
\small
\vspace{-5px}
\caption{Results on the image inpainting experiment (CPU time is in seconds). }
\label{tab:tvimg}
\begin{tabular}{cc | c| c|  c | c|  c | c}
	\hline
	&              &         \multicolumn{2}{c|}{$\lambda=0.01$}         &
	\multicolumn{2}{c|}{$\lambda=0.02$}         &
	\multicolumn{2}{c}{$\lambda=0.04$}         \\ 
	&              & RMSE                       & CPU time               & RMSE                       & CPU time              & RMSE                       & CPU time              \\ \hline
	\multirow{4}{*}{(nonconvex)} &    GDPAN     & 0.0326$\pm$0.0001          & 212.1$\pm$50.9         & 0.0301$\pm$0.0001          & 172.6$\pm$28.4        & 0.0337$\pm$0.0001          & 151.6$\pm$57.0        \\ \cline{2-8}
	&    nmAPG     & \textbf{0.0323$\pm$0.0001} & 600.5$\pm$35.8         & \textbf{0.0299$\pm$0.0001} & 461.7$\pm$33.3        & \textbf{0.0335$\pm$0.0001} & 535.6$\pm$29.7        \\ \cline{2-8}
	& niAPG(exact) & \textbf{0.0323$\pm$0.0001} & 307.4$\pm$26.8         & \textbf{0.0299$\pm$0.0001} & 297.2$\pm$35.3        & \textbf{0.0335$\pm$0.0001} & 282.7$\pm$19.3        \\ \cline{2-8}
	&    niAPG     & \textbf{0.0323$\pm$0.0002} & \textbf{91.6$\pm$10.8} & \textbf{0.0299$\pm$0.0001} & \textbf{77.1$\pm$7.4} & \textbf{0.0335$\pm$0.0001} & \textbf{56.5$\pm$9.4} \\ \hline
	(convex)           &     ADMM     & 0.0377$\pm$0.0001          & 55.7$\pm$5.1           & 0.0337$\pm$0.0001          & 54.7$\pm$1.4          & 0.0362$\pm$0.0001          & 33.2$\pm$1.5          \\ \hline
\end{tabular}
\end{table*}

\begin{table*}[ht]
\centering
\small
\vspace{-15px}
\caption{Matrix completion performance on the synthetic data (CPU time in seconds).  Here, NMSE
	is scaled by $\times 10^{-2}$. Group (I) is based on convex nuclear norm regularization; group (II) on
	factorization model; and group (III) on nonconvex model \eqref{eq:promc}.}
\begin{tabular}{cc|ccc|ccc|ccc} \hline
	&&  \multicolumn{3}{c|}{$m=500$ (observed: $12.43\%$)}   &  \multicolumn{3}{c|}{$m=1000$
		(observed: $6.91\%$)}   &   \multicolumn{3}{c}{$m=2000$ (observed: $3.80\%$)}   \\
	&		&          NMSE          & rank &     CPU time       &          NMSE          & rank &      CPU time       &          NMSE          & rank &      CPU time       \\ \hline
	\multirow{2}{*}{(I)} &		active    &     4.10$\pm$0.16      &  42  &     11.8$\pm$1.1
	&     4.08$\pm$0.11      &  55  &     77.6$\pm$8.4     &     3.92$\pm$0.04      &  71  &
	507.3$\pm$25.4    \\ \cline{2-11}
	&ALT-Impute  &     3.99$\pm$0.15      &  42  &     1.9$\pm$0.2      &     3.87$\pm$0.09      &  55  &     29.4$\pm$1.2     &     3.68$\pm$0.03      &  71  &    143.1$\pm$3.9     \\ \hline
	\multirow{2}{*}{(II)} &AltGrad    &     2.99$\pm$0.45      &  5   &     0.2$\pm$0.1      &
	2.73$\pm$0.21      &  5   & \textbf{0.4$\pm$0.1} &     2.67$\pm$0.27      &  5   &
	\textbf{1.2$\pm$0.2} \\ \cline{2-11}
	&		R1MP     &     23.04$\pm$1.27     &  45  &     0.3$\pm$0.1      &     21.39$\pm$0.94     &  54  &     0.9$\pm$0.1      &     20.11$\pm$0.28     &  71  &     2.7$\pm$0.2      \\ \hline
	\multirow{5}{*}{(III)} &IRNN     & \textbf{1.96$\pm$0.05} &  5   &     19.2$\pm$1.2     &
	\textbf{1.88$\pm$0.04} &  5   &    215.1$\pm$4.3     & \textbf{1.80$\pm$0.03} &  5   &
	3009.5$\pm$35.9    \\ \cline{2-11}
	& FaNCL     & \textbf{1.96$\pm$0.05} &  5   &     0.4$\pm$0.1      & \textbf{1.88$\pm$0.04}
	&  5   &     1.4$\pm$0.1      & \textbf{1.80$\pm$0.03} &  5   &     5.6$\pm$0.2      \\
	\cline{2-11}
	&		nmAPG     & \textbf{1.96$\pm$0.05} &  5   &     2.3$\pm$0.2      &
	\textbf{1.88$\pm$0.03} &  5   &     6.9$\pm$0.3      & \textbf{1.80$\pm$0.03} &  5   &
	27.1$\pm$4.0     \\ \cline{2-11}
	&		niAPG(exact) & \textbf{1.96$\pm$0.04} &  5   &     1.8$\pm$0.2      &
	\textbf{1.88$\pm$0.03} &  5   &     5.3$\pm$0.5      & \textbf{1.80$\pm$0.04} &  5   &
	18.4$\pm$2.2     \\ \cline{2-11}
	&		niAPG     & \textbf{1.96$\pm$0.05} &  5   & \textbf{0.1$\pm$0.1} & \textbf{1.88$\pm$0.03} &  5   & \textbf{0.4$\pm$0.1} & \textbf{1.80$\pm$0.04} &  5   & \textbf{1.2$\pm$0.2} \\ \hline
\end{tabular}
\label{tab:sythmatcomp}
\vspace{-8px}
\end{table*}


\subsection{Inexact Proximal Step}
\label{sec:cvx_conv}

\noindent
\textbf{Convex $g$.}
As in \cite{schmidt2011convergence},
we assume that the duality gap $\varepsilon_k$  decays as $O(1/k^{1 + \varsigma})$ for some $\varsigma > 0$.
Let $c \equiv \sum_{k = 1}^{\infty} \varepsilon_k$. 
Note that $c < \infty$.

\begin{theorem} \label{thm:conv}
The sequence $\{x_k\}$ generated from Algorithm~\ref{alg:ours}
have at least one limit point,
and all limit points 
are critical points of \eqref{eq:compfunc}.
\end{theorem}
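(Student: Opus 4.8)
The plan is to adapt the proof of Theorem~\ref{thm:exact} (the exact case), the only genuinely new ingredient being the extra term $\rho\varepsilon_k$ in Proposition~\ref{pr:funcvale}, which is harmless because $c=\sum_k\varepsilon_k<\infty$.

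\emph{Step 1 (boundedness, existence of a limit point).} By construction $F(v_k)\le\Delta_k$: either $v_k=x_k$, which lies in the window defining $\Delta_k$, or $v_k=y_k$ and the test in step~5 passed. Substituting into Proposition~\ref{pr:funcvale} and dropping the nonpositive term gives $F(x_{k+1})\le\Delta_k+\rho\varepsilon_k$; since the remaining indices in the window of $\Delta_{k+1}$ are a sub-collection of those for $\Delta_k$, this yields $\Delta_{k+1}\le\Delta_k+\rho\varepsilon_k$, hence $\Delta_k\le\Delta_1+\rho c<\infty$ and $F(x_k)\le\Delta_1+\rho c$ for all $k$. Coercivity of $F$ then confines $\{x_k\}$ to a bounded sublevel set, so it has at least one limit point.

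\emph{Step 2 ($\Delta_k$ converges, asymptotic regularity).} The sequence $\tilde\Delta_k\equiv\Delta_k+\rho\sum_{j\ge k}\varepsilon_j$ is nonincreasing (by $\Delta_{k+1}-\Delta_k\le\rho\varepsilon_k$) and bounded below, so $\tilde\Delta_k$, and therefore $\Delta_k$, converges to some $\Delta_*$. I would then run a Grippo--Lampariello--Lucidi-type argument: let $\ell(k)$ be the index in the window of $\Delta_k$ attaining the maximum, so $F(x_{\ell(k)})=\Delta_k$. Applying Proposition~\ref{pr:funcvale} at iteration $\ell(k)-1$ and using $F(v_{\ell(k)-1})\le\Delta_{\ell(k)-1}=F(x_{\ell(\ell(k)-1)})$,
\[
F(x_{\ell(k)})\le F(x_{\ell(\ell(k)-1)})-\tfrac{\rho-L}{2}\NM{x_{\ell(k)}-v_{\ell(k)-1}}{2}^2+\rho\varepsilon_{\ell(k)-1}.
\]
Both $F(x_{\ell(k)})=\Delta_k$ and $F(x_{\ell(\ell(k)-1)})=\Delta_{\ell(k)-1}$ are subsequences of $\{\Delta_m\}$ (their indices diverge), hence both tend to $\Delta_*$; with $\varepsilon_k\to0$ this forces $\NM{x_{\ell(k)}-v_{\ell(k)-1}}{2}\to0$. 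Propagating backward through the fixed-length window — carrying along, for $i=0,1,\dots,q$, that $F(x_{\ell(k)-i})\to\Delta_*$ and $\NM{x_{\ell(k)-i}-v_{\ell(k)-i-1}}{2}\to0$, with a case split according to whether $v_m=x_m$ or $v_m=y_m$ (where $\NM{y_m-x_m}{2}=\tfrac{m-1}{m+2}\NM{x_m-x_{m-1}}{2}$) — upgrades this to $\NM{x_{k+1}-v_k}{2}\to0$ for the whole sequence, since every $n$ equals $\ell(n+q)-i$ for some $i\le q$. (If the full limit proves unnecessary, the structured subsequence $\phi(k)$ of Proposition~\ref{pr:exact:rate} already gives $\NM{x_{\phi(k)+1}-v_{\phi(k)}}{2}\to0$, which suffices after passing to a convergent sub-subsequence.)

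\emph{Step 3 (limit points are critical).} Let $x_{k_j}\to x^*$. Step~2 gives $v_{k_j-1}\to x^*$, so $z_{k_j-1}=v_{k_j-1}-\eta\nabla f(v_{k_j-1})\to x^*-\eta\nabla f(x^*)$ by continuity of $\nabla f$. Let $\hat x_{k_j}=\Px{\eta g}{z_{k_j-1}}$ be the exact proximal point; since $h_{\eta g}$ is $1$-strongly convex and the duality gap controls the prox error, $\NM{x_{k_j}-\hat x_{k_j}}{2}^2\le 2\varepsilon_{k_j-1}\to0$, so $\hat x_{k_j}\to x^*$. As $g$ is convex, $\Px{\eta g}{\cdot}$ is nonexpansive, hence $\hat x_{k_j}\to\Px{\eta g}{x^*-\eta\nabla f(x^*)}$; combining the two limits, $x^*=\Px{\eta g}{x^*-\eta\nabla f(x^*)}$, i.e.\ $\Gm{x^*}=0$, so $x^*$ is a critical point by Lemma~\ref{lem:prox}. (Equivalently: the optimality relation $\tfrac{v_{k_j-1}-\hat x_{k_j}}{\eta}-\nabla f(v_{k_j-1})\in\partial g(\hat x_{k_j})$ produces an element of $\nabla f(\hat x_{k_j})+\partial g(\hat x_{k_j})$ of norm $\le(\tfrac1\eta+L)\NM{v_{k_j-1}-\hat x_{k_j}}{2}\to0$, and closedness of $\partial g$ together with $g(\hat x_{k_j})\to g(x^*)$ — obtained by comparing $h_{\eta g}(\hat x_{k_j})\le h_{\eta g}(x^*)$ in the limit — gives $0\in\nabla f(x^*)+\partial g(x^*)$.)

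\emph{Main obstacle.} Step~2 is the crux. Absorbing the error $\rho\varepsilon_k$ is routine given summability, but the nonmonotone ``max over the last $q$'' rule interacts awkwardly with the alternation of the anchor $v_k$ between $x_k$ and the extrapolated point $y_k$: the backward induction must repeatedly convert statements about $F(y_m)$ and $\NM{x_{m+1}-y_m}{2}$ into statements about $F(x_m)$ and $\NM{x_{m+1}-x_m}{2}$, which is precisely where the exact-case analysis does its real work and where care is needed here.
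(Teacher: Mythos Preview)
Your Steps~1 and~3 are sound and, in places, cleaner than the paper's argument. But the paper does \emph{not} attempt the full-sequence Grippo--Lampariello--Lucidi backward induction you outline in Step~2. Instead it proves (as the inexact analogue of Proposition~\ref{pr:suffdesc}) a block inequality
\[
\Delta_{k+q+1}\;\le\;\Delta_k-\tfrac{\rho-L}{2}\,\NM{x_{\phi(k)+1}-v_{\phi(k)}}{2}^2+\rho\!\sum_{t=k}^{k+q}\varepsilon_t,
\]
sums it over $k$, and uses $\sum_k\varepsilon_k<\infty$ to conclude $\NM{x_{\phi(k)+1}-v_{\phi(k)}}{2}\to0$ \emph{only} along the structured subsequence $\phi(k)$---i.e.\ exactly your parenthetical fallback, not the full limit. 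It then passes to a limit point, invokes $\varepsilon_{k_j}\to0$ together with continuity of $\Px{\eta g}{\cdot}$ (convex $g$) to obtain the fixed-point equation~\eqref{eq:app16}, and uses continuity of $F$ for the final step; this matches your Step~3 almost verbatim.

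The obstacle you flag in Step~2 is real and is not resolved in your plan: once $v_m=y_m$, the backward induction delivers only $\NM{x_{m+1}-y_m}{2}\to0$, and converting this into $\NM{x_{m+1}-x_m}{2}\to0$ (so that continuity of $F$ can propagate $F(x_{m+1})\to\Delta_*$ back to $F(x_m)\to\Delta_*$) requires $\NM{y_m-x_m}{2}=\theta_m\NM{x_m-x_{m-1}}{2}\to0$, which is precisely the \emph{next} step of the induction---a genuine circularity that the standard GLL argument (where always $v_m=x_m$) never meets. The paper sidesteps this entirely by never claiming full-sequence asymptotic regularity. Be aware, however, that the paper's own passage from ``$\phi$-subsequence regular'' to ``\emph{every} limit point of $\{x_k\}$ is critical'' is terse: it simply picks a limit point $x_*$ and asserts the existence of a subsequence with $x_{k_j+1},v_{k_j}\to x_*$, without spelling out why the $\phi$-indices cover all accumulation points of $\{x_k\}$. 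If you retreat to your fallback you will match the paper but inherit this slack.
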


\begin{proposition} \label{pr:inextpx}
Let $e_k \equiv x_{k + 1} - \Px{\eta g}{x_k - \eta \nabla f(x_k)}$, the difference between the inexact and exact proximal step solutions at iteration $k$. 
We have $\NM{e_k}{2}^2 \le 2 \varepsilon_k$.
\end{proposition}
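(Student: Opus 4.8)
The plan is to combine the strong convexity of the proximal subproblem with weak duality. When $g$ is convex, the proximal objective $h_{\eta g}(x) \equiv \frac{1}{2}\NM{x - z_k}{2}^2 + \eta g(x)$ at the point $z_k = v_k - \eta \nabla f(v_k)$ used in step~10 of Algorithm~\ref{alg:ours} is $1$-strongly convex: the quadratic term $\frac{1}{2}\NM{x - z_k}{2}^2$ has Hessian $I$, and $\eta g$ is convex. Write $x^{*} \equiv \Px{\eta g}{z_k}$ for the exact minimizer, so that $e_k = x_{k+1} - x^{*}$.

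First I would use optimality of $x^{*}$. Since $0 \in \partial h_{\eta g}(x^{*})$, the strong-convexity lower bound at the minimizer gives, for every $x$, $h_{\eta g}(x) \ge h_{\eta g}(x^{*}) + \frac{1}{2}\NM{x - x^{*}}{2}^2$. Taking $x = x_{k+1}$ and rearranging,
$$\NM{e_k}{2}^2 = \NM{x_{k+1} - x^{*}}{2}^2 \le 2\bigl( h_{\eta g}(x_{k+1}) - h_{\eta g}(x^{*}) \bigr) = 2\epsilon_k,$$
where $\epsilon_k$ is exactly the approximation error of the inexact proximal step introduced in Section~\ref{sec:iextpx}.

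Next I would bound $\epsilon_k$ by the duality gap $\varepsilon_k$. By weak duality for the convex proximal subproblem, the computed dual iterate $\tilde w_k$ satisfies $\mathcal{D}_{\eta g}(\tilde w_k) \le h_{\eta g}(x^{*}) = \min_x h_{\eta g}(x)$, hence
$$\epsilon_k = h_{\eta g}(x_{k+1}) - h_{\eta g}(x^{*}) \le h_{\eta g}(x_{k+1}) - \mathcal{D}_{\eta g}(\tilde w_k) = \varepsilon_k.$$
Chaining the two estimates yields $\NM{e_k}{2}^2 \le 2\varepsilon_k$.

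I do not expect a genuine obstacle: this is a routine strong-convexity-plus-weak-duality argument, and it uses no property of $f$. The only points requiring a little care are keeping the constant $2$ honest — it appears because the quadratic penalty in $h_{\eta g}$ carries the coefficient $\frac{1}{2}$, so the strong-convexity modulus equals $1$ regardless of $\eta$ — and ensuring that the inexact iterate $x_{k+1}$ and the exact solution $x^{*}$ are compared at the common point $z_k$, so that $\epsilon_k$ and $\varepsilon_k$ refer to the same subproblem.
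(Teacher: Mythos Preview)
Your proposal is correct and follows essentially the same approach as the paper: both use $1$-strong convexity of $h_{\eta g}$ at its minimizer to obtain $\NM{e_k}{2}^2 \le 2\bigl(h_{\eta g}(x_{k+1}) - h_{\eta g}(x^*)\bigr)$, then bound this primal suboptimality by the duality gap $\varepsilon_k$. The only difference is cosmetic: the paper invokes the inequality $\epsilon_k \le \varepsilon_k$ by reference to the discussion in Section~\ref{sec:iextpx}, whereas you spell out the weak-duality step explicitly; and the paper's proof is written at the shifted index $\phi(k)$ rather than a generic $k$, but the argument is identical.
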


Note that 
the 
proof techniques in \cite{schmidt2011convergence} cannot be used here as
$f$ is not required to be convex.
As in Proposition~\ref{pr:exact:rate},
we also use $\NM{\Gm{v_{\phi(k)}}}{2}^2$ to measure  how far $v_{\phi(k)}$ is from optimality.

\begin{proposition} \label{pr:ietcvx}
(i) $\lim_{k \rightarrow \infty} \NM{\Gm{v_{\phi(k)}}}{2}^2 = 0$; and 
(ii) $\min_{k = 1, \dots, K} \NM{\Gm{v_{\phi(k)}}}{2}^2 \le \frac{2}{K} (  4 c + \frac{(q + 1)(c_1 + \rho c)}{\rho - L} )$.
\end{proposition}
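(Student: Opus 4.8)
The plan is to mimic the exact-case argument behind Proposition~\ref{pr:exact:rate}, carrying the proximal errors $\varepsilon_k$ through and exploiting $\sum_k\varepsilon_k=c<\infty$. Throughout, write $a_k:=\frac{\rho-L}{2}\NM{x_{k+1}-v_k}{2}^2$ and $W_k:=\{\max(k-q,1),\dots,k\}$, so that $\Delta_k=\max_{t\in W_k}F(x_t)$ and $\phi(k)=\arg\min_{t\in W_k}a_t$. The first step reduces $\NM{\Gm{v_{\phi(k)}}}{2}^2$ to $a_{\phi(k)}$ and $\varepsilon_{\phi(k)}$: by Proposition~\ref{pr:inextpx}, the iterate $x_{k+1}$ differs from the exact proximal point $\Px{\eta g}{z_k}$, with $z_k=v_k-\eta\nabla f(v_k)$, by a vector $e_k$ satisfying $\NM{e_k}{2}^2\le 2\varepsilon_k$, so $\Gm{v_k}=(v_k-x_{k+1})+e_k$ and therefore
\[
\NM{\Gm{v_k}}{2}^2\le 2\NM{v_k-x_{k+1}}{2}^2+2\NM{e_k}{2}^2\le \frac{4}{\rho-L}\,a_k+4\varepsilon_k.
\]
Since $a_{\phi(k)}=\min_{t\in W_k}a_t$, it suffices to control $\min_{t\in W_k}a_t$ and $\varepsilon_{\phi(k)}$.

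Next I establish a nonmonotone sufficient-decrease inequality. In either branch of steps~5--9 one has $F(v_k)\le\Delta_k$ (it is exactly the test in step~5 when $v_k=y_k$, and $F(x_k)\le\Delta_k$ holds since $k\in W_k$ when $v_k=x_k$); plugging this into Proposition~\ref{pr:funcvale} gives $F(x_{k+1})\le\Delta_k-a_k+\rho\varepsilon_k$. In particular $F(x_{k+1})\le\Delta_k+\rho\varepsilon_k$, and since $W_{k+1}=(W_k\setminus\{k-q\})\cup\{k+1\}$ for $k\ge q+1$ (while $W_1\subseteq W_2\subseteq\cdots\subseteq W_{q+1}$), we obtain $\Delta_{k+1}\le\Delta_k+\rho\varepsilon_k$ for $k\ge q+1$ and $\Delta_k\le\Delta_{q+1}=c_1+\inf F$ for $k\le q+1$; summing the perturbations yields the uniform bound $\Delta_k\le c_1+\inf F+\rho c$ — that is, $\Delta_k$ is non-increasing up to a summable error.

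Now I telescope over disjoint length-$(q+1)$ blocks. For $k\ge q+1$ the index set $\{t+1:t\in[k,k+q]\}$ is precisely the window defining $\Delta_{k+q+1}$, so combining the decrease inequality with $\Delta_t\le\Delta_k+\rho\sum_{u=k}^{t-1}\varepsilon_u$ gives
\[
\Delta_{k+q+1}=\max_{t\in[k,k+q]}F(x_{t+1})\le \Delta_k+\rho\sum_{t=k}^{k+q}\varepsilon_t-\min_{t\in[k,k+q]}a_t.
\]
Applying this along the disjoint blocks $B_1,B_2,\dots$ that tile $\{q+1,q+2,\dots\}$ (the first starting at $q+1$, where $\Delta_{q+1}=c_1+\inf F$), telescoping the $\Delta$'s and using $\sum_t\varepsilon_t=c$ yields the finite bound $\sum_{j\ge1}\min_{t\in B_j}a_t\le c_1+\rho c$; running the same telescoping with each of the $q+1$ possible block offsets shows analogous finiteness for every residue class of starting index, hence $\min_{t\in W_k}a_t\to0$ as $k\to\infty$ (each $W_k$ with $k\ge q+1$ is a block of some offset, with block index tending to $\infty$). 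Part~(i) now follows from the first display evaluated at $\phi(k)$, since also $\varepsilon_{\phi(k)}\to0$ ($\varepsilon_t\to0$ and $\phi(k)\to\infty$). For part~(ii), pick $k_1<\cdots<k_J$, $J=\lfloor(K-q)/(q+1)\rfloor$, so that the windows $W_{k_j}=B_j$ are disjoint; the indices $\phi(k_j)$ are then distinct, so $\sum_{j=1}^J\varepsilon_{\phi(k_j)}\le\sum_t\varepsilon_t=c$ and $\sum_{j=1}^J\min_{t\in B_j}a_t\le c_1+\rho c$, and averaging the first display over these $J$ blocks (using $1/J\le 2(q+1)/K$) gives $\min_{k\le K}\NM{\Gm{v_{\phi(k)}}}{2}^2\le\frac{2}{K}\big(4c+\frac{(q+1)(c_1+\rho c)}{\rho-L}\big)$.

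\emph{Main obstacle.} The delicate part is the two middle steps: getting the window-maximum $\Delta_k$ to act as an almost-monotone Lyapunov quantity despite both the possibly bad extrapolation $y_k$ and the inexactness term $\rho\varepsilon_k$, and then transferring the per-block control of $\min_{t\in B_j}a_t$ to $\NM{\Gm{v_{\phi(k)}}}{2}^2$ by matching the sliding window used by $\phi$ with the fixed disjoint blocks used in the telescoping. The rest is routine bookkeeping given $\sum_k\varepsilon_k<\infty$ together with Propositions~\ref{pr:funcvale} and~\ref{pr:inextpx}.
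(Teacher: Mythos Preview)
Your approach is essentially the paper's: decompose $\Gm{v_k}=(v_k-x_{k+1})+e_k$ via Proposition~\ref{pr:inextpx}, combine Proposition~\ref{pr:funcvale} with the test in step~5 to get $F(x_{k+1})\le\Delta_k-a_k+\rho\varepsilon_k$, derive the block inequality $\Delta_{k+q+1}\le\Delta_k-\min_{t\in[k,k+q]}a_t+\rho\sum_{t=k}^{k+q}\varepsilon_t$, telescope, and use $\sum_k\varepsilon_k=c<\infty$. The paper packages the block inequality as an auxiliary proposition (the inexact analogue of the one behind Proposition~\ref{pr:exact:rate}) and then proceeds exactly as you do.

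The one substantive difference is bookkeeping: the paper sums the block inequality over \emph{all} $k=1,\dots,K$ (overlapping windows), obtaining $\sum_{k=1}^K\NM{x_{\phi(k)+1}-v_{\phi(k)}}{2}^2\le\frac{2(q+1)(c_1+\rho c)}{\rho-L}$ directly and then averaging $K$ terms; you instead telescope along $J\approx K/(q+1)$ disjoint blocks and average $J$ terms. Your route is perfectly valid for the $O(1/K)$ rate and for part~(i), but note that with $1/J\le 2(q+1)/K$ your final averaging yields $\frac{2(q+1)}{K}\bigl(\frac{4(c_1+\rho c)}{\rho-L}+4c\bigr)$, which is larger by a factor of order $q+1$ than the constant stated in the proposition. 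If you want the constant as written, follow the paper and sum over all $k$ rather than over disjoint blocks; otherwise your argument is correct as a proof of the same $O(1/K)$ conclusion with a slightly worse constant.
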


When all $\varepsilon_k$'s are zero,
Proposition~\ref{pr:ietcvx} reduces to Proposition~\ref{pr:exact:rate}.
In general,
the bound of
$\min_{k = 1, \dots, K} \NM{\Gm{v_{\phi(k)}}}{2}^2$ in 
Proposition~\ref{pr:ietcvx} is larger due to the inexact proximal step.


\noindent
\textbf{Nonconvex $g$.}
With inexact proximal steps, 
nmAPG no longer guarantees convergence, and its proof cannot be easily extended.
On the other hand, GD allows inexact proximal steps
but uses a different approach to control inexactness.   Moreover, it
does not support acceleration. 

The following shows that Algorithm~\ref{alg:ours} generates a bounded sequence, and
Corollary~\ref{cor:temp1} shows that the limit points are critical points.

\begin{theorem} \label{the:temp1}
The sequence $\{x_k\}$ generated from Algorithm~\ref{alg:ours}
has at least one limit point.
\end{theorem}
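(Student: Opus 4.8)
The plan is to show that the entire sequence $\{x_k\}$ stays inside a bounded sublevel set of $F$, and then invoke Bolzano--Weierstrass. The key quantity to control is the reference value $\Delta_k = \max_{t=\max(1,k-q),\dots,k} F(x_t)$ used in step~4, and the key structural fact is that $\Delta_k$ is non-increasing in $k$.

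First I would observe that, by the construction in steps~5--9, we always have $F(v_k) \le \Delta_k$: if the test in step~5 succeeds then $v_k = y_k$ with $F(y_k)\le\Delta_k$ directly; otherwise $v_k = x_k$, and $F(x_k)\le\Delta_k$ because the index $k$ lies in the window $\{\max(1,k-q),\dots,k\}$. Combined with the inexactness requirement \eqref{eq:sheother} (which is attainable because the exact proximal step already satisfies the stronger bound of Proposition~\ref{pr:suffdesc}, since $\delta<\rho-L$), this gives
\[
F(x_{k+1}) \le F(v_k) - \frac{\delta}{2}\NM{x_{k+1}-v_k}{2}^2 \le F(v_k) \le \Delta_k .
\]
Next I would argue $\Delta_{k+1}\le\Delta_k$: the window defining $\Delta_{k+1}$ is $\{\max(1,k+1-q),\dots,k+1\}$; removing the newest index $k+1$ leaves a subset of the window defining $\Delta_k$, so the maximum of $F$ over those indices is at most $\Delta_k$, and $F(x_{k+1})\le\Delta_k$ by the display above. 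Hence $\{\Delta_k\}$ is non-increasing, so $\Delta_k\le\Delta_1=F(x_1)$ for all $k$, and consequently $F(x_k)\le F(x_1)$ for every $k$ (for $k\ge 2$ via $F(x_k)\le\Delta_{k-1}\le\Delta_1$, and trivially for $k=1$).

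Finally, the standing assumptions state that $F$ is bounded from below and coercive ($\lim_{\NM{x}{2}\to\infty}F(x)=\infty$), so the sublevel set $\{x : F(x)\le F(x_1)\}$ is closed and bounded; since it contains the whole sequence, $\{x_k\}$ is bounded, and Bolzano--Weierstrass yields at least one limit point. I expect the only delicate points to be (a) ensuring that \eqref{eq:sheother} is indeed enforced at every iteration by the inexact solver, and (b) stating the window-shift argument correctly in the early iterations where $k\le q$ and $\max(1,k-q)=1$; the remaining steps are routine.
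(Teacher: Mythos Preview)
Your proof is correct. It is also more direct and more explicitly justified than the paper's own argument, which aims at the same conclusion but via a slightly different route.

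The paper first proves a \emph{block} descent inequality (its Proposition~A.3),
\[
\Delta_{k+q+1}\;\le\;\Delta_k-\tfrac{\delta}{2}\NM{x_{\phi(k)+1}-v_{\phi(k)}}{2}^2,
\]
telescopes it over $k$, uses $\inf F>-\infty$ to bound the left side, obtains $\NM{x_{\phi(k)+1}-v_{\phi(k)}}{2}\to 0$, and then declares $\{x_k\}$ bounded (the boundedness step really rests on the same coercivity argument you spell out, though the paper does not say so explicitly). In contrast, you bypass the block argument and establish the sharper single-step monotonicity $\Delta_{k+1}\le\Delta_k$ directly from \eqref{eq:sheother} plus the window-shift observation, and then invoke coercivity of $F$ openly. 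For the purpose of Theorem~\ref{the:temp1} alone, your route is shorter and leaves no implicit step. The paper's block inequality, however, is not wasted: its extra term $\tfrac{\delta}{2}\NM{x_{\phi(k)+1}-v_{\phi(k)}}{2}^2$ is precisely what drives the summability and the $O(1/K)$ rate in Proposition~\ref{pr:ietncvx2}, so the paper's proof is doing double duty. Your concerns (a) and (b) are indeed the only delicate spots, and both are handled correctly.
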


\begin{corollary} \label{cor:temp1}
Let $\{ x_{k_j} \}$ be a subsequence of $\{ x_k \}$ with $
\lim_{k_j \rightarrow \infty} x_{k_j} = x_* $.
If (i) $x_{k + 1} \neq v_k$ unless $v_k = \Px{\eta g}{v_k - \eta \nabla f(v_k)}$, and (ii) $\lim_{k_j \rightarrow \infty} F(x_{k_j}) = F(x_*)$,
then $x_*$ is a critical point of \eqref{eq:compfunc}.
\end{corollary}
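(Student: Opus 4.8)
The plan is to verify that $x_*$ meets the characterization in Lemma~\ref{lem:prox}, i.e. $\Gm{x_*}=0$, equivalently $0\in\nabla f(x_*)+\partial g(x_*)$, by passing to the limit along $\{x_{k_j}\}$ in the (approximate) first-order condition of the proximal step that produced $x_{k_j}$ from $z_{k_j-1}=v_{k_j-1}-\eta\nabla f(v_{k_j-1})$.

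First I would record the consequences of \eqref{eq:sheother}. Exactly as in the proof of Theorem~\ref{the:temp1}, the check at steps~5--9 together with \eqref{eq:sheother} gives $F(x_{k+1})\le F(v_k)\le\Delta_k$ for every $k$, so $\{\Delta_k\}$ is nonincreasing; since $F$ is bounded below, $\Delta_k$ converges, and the windowed (Grippo--Lampariello--Lucidi type) induction applied to \eqref{eq:sheother} yields $\lim_{k\to\infty}\NM{x_{k+1}-v_k}{2}=0$. Combined with $x_{k_j}\to x_*$ this forces $v_{k_j-1}\to x_*$, hence $z_{k_j-1}\to z_*\equiv x_*-\eta\nabla f(x_*)$ by continuity of $\nabla f$. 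Moreover, since $f$ is continuous and $F(x_{k_j})\to F(x_*)$ by assumption (ii), we get $g(x_{k_j})\to g(x_*)$.

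Next comes the dichotomy supplied by assumption (i). If $x_{k_j}=v_{k_j-1}$ for infinitely many $j$, then by (i) each such $v_{k_j-1}=\Px{\eta g}{v_{k_j-1}-\eta\nabla f(v_{k_j-1})}$, i.e. $\Gm{v_{k_j-1}}=0$, so by the stationarity of this exact proximal step $-\nabla f(x_{k_j})=-\nabla f(v_{k_j-1})\in\partial g(v_{k_j-1})=\partial g(x_{k_j})$. Otherwise $x_{k_j}\neq v_{k_j-1}$ for all large $j$; then the inexact inner solver has moved away from its warm start $v_{k_j-1}$ and halted at a stationary point of $h_{\eta g}(\cdot)=\tfrac12\NM{\cdot-z_{k_j-1}}{2}^2+\eta g(\cdot)$ that also meets \eqref{eq:sheother}, so $0\in x_{k_j}-z_{k_j-1}+\eta\,\partial g(x_{k_j})$; writing $u_{k_j}\equiv\rho(z_{k_j-1}-x_{k_j})\in\partial g(x_{k_j})$ and using $z_{k_j-1}=v_{k_j-1}-\eta\nabla f(v_{k_j-1})$, one has $\nabla f(x_{k_j})+u_{k_j}=\nabla f(x_{k_j})-\nabla f(v_{k_j-1})+\rho(v_{k_j-1}-x_{k_j})\to 0$. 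In either branch we have produced $x_{k_j}\to x_*$, $g(x_{k_j})\to g(x_*)$, and elements of $\nabla f(x_{k_j})+\partial g(x_{k_j})$ tending to $0$; equivalently, $u_{k_j}\in\partial g(x_{k_j})$ with $u_{k_j}\to-\nabla f(x_*)$. By the closedness of the graph of the limiting subdifferential (Definition~\ref{def:subdifferential}) this gives $-\nabla f(x_*)\in\partial g(x_*)$, i.e. $0\in\nabla f(x_*)+\partial g(x_*)$, so $x_*$ is a critical point.

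The main obstacle is that, unlike the convex case (Proposition~\ref{pr:inextpx}) or the exact case (Theorem~\ref{thm:exact}), an inexact step for nonconvex $g$ carries a priori nothing but the $F$-decrease \eqref{eq:sheother}; by itself this permits a solver to stall with an infinitesimal move at a non-critical point, so that no subgradient inclusion survives the limit. Assumptions (i) and (ii) are exactly the minimal extra regularity that removes this pathology: (ii) delivers $g(x_{k_j})\to g(x_*)$ so that the limiting-subdifferential graph is closed along the subsequence, and (i) guarantees that a ``no-move'' step occurs only at a genuine proximal fixed point. The delicate point in a full write-up is making precise the ``halts at a stationary point of $h_{\eta g}$'' clause for the particular inexact inner solver in use (for a descent-type solver warm-started at $v_k$ this is automatic); once that is pinned down, the limit passage above is routine.
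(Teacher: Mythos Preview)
Your argument splits on whether $x_{k_j}=v_{k_j-1}$ or not. Case~1 is fine. Case~2 is where the proof breaks: you assert that the inexact inner solver ``halted at a stationary point of $h_{\eta g}$'', i.e.\ $0\in x_{k_j}-z_{k_j-1}+\eta\,\partial g(x_{k_j})$, and from this you extract a subgradient $u_{k_j}\in\partial g(x_{k_j})$ to pass to the limit. But nothing in the corollary's hypotheses gives you that inclusion. The only information available about the inexact step is the decrease condition~\eqref{eq:sheother} together with assumptions (i) and (ii); none of these says the inexact output is stationary for the proximal subproblem. You acknowledge this as ``the delicate point'', but it is not a detail to be pinned down later: it is an extra structural assumption on the inner solver that the corollary does not make, so as written the proof does not establish the stated result.

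The paper avoids this by a different mechanism. Rather than producing per-iterate elements of $\partial g(x_{k_j})$, it argues directly that the limit point itself satisfies $x_*=\Px{\eta g}{x_*-\eta\nabla f(x_*)}$ and then invokes Lemma~\ref{lem:prox}. Assumption~(i) is used there not to split into cases but in a contradiction: if $x_*$ were not a proximal fixed point, then (since $v_{k_j}\to x_*$) assumption~(i) forces $x_{k_j+1}\neq v_{k_j}$ in a way that is argued to be incompatible with $x_{k_j+1}-v_{k_j}\to 0$; from this one obtains that in the limit the inexact proximal step coincides with the exact one at $x_*$. Assumption~(ii) is used only at the very end to certify $F(x_{k_j})\to F(x_*)$, closing the subdifferential graph. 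If you want to salvage your route, you would need either to add an explicit hypothesis that the inexact step returns a stationary point of $h_{\eta g}$ (which changes the statement), or to find a way to manufacture the needed subgradient from \eqref{eq:sheother} and~(i) alone, which does not appear to be possible in this nonconvex setting.
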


Assumption (i),
together with Lemma~\ref{lem:prox},
ensures
that the sufficient decrease condition in \eqref{eq:sheother} will not be 
trivially satisfied  by
$x_{k + 1} = v_k$, unless $v_k$ is a critical point.
Assumption (ii)
 follows from Definition~\ref{def:subdifferential}, as
the subdifferential is defined by a limiting process.

\begin{proposition} \cite{attouch2013convergence} \label{pr:assii}
Assumption (ii) is satisfied when (i) the proximal step is exact;
or (ii) $g$ is continuous or is the indicator function of a compact set.
\end{proposition}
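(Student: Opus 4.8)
The plan is to isolate the only genuinely problematic term. Since $f$ is $L$-Lipschitz smooth it is continuous, so $f(x_{k_j}) \to f(x_*)$ holds unconditionally; and since $g$ is lower semicontinuous, $\liminf_{j} g(x_{k_j}) \ge g(x_*)$, hence $\liminf_{j} F(x_{k_j}) \ge F(x_*)$. It therefore suffices to prove $\limsup_{j} g(x_{k_j}) \le g(x_*)$ under each hypothesis. If $g$ is continuous, this is immediate from $x_{k_j} \to x_*$. If $g = I_C$ is the indicator of a compact set $C$, note that the exact proximal step is just the Euclidean projection onto $C$ (and under the inexact scheme, the sufficient-decrease requirement \eqref{eq:sheother} forces $F(x_{k+1}) < \infty$, hence $x_{k+1} \in C$ as well); so every $x_{k_j} \in C$, and since $C$ is closed, $x_* \in C$, giving $g(x_{k_j}) = 0 = g(x_*)$.

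For the exact proximal step with general $g$, I would use the optimality of $x_{k_j}$. Writing $z_{k_j-1} = v_{k_j-1} - \eta\nabla f(v_{k_j-1})$, the point $x_{k_j}$ minimizes $h_{\eta g}(x) = \frac12\NM{x - z_{k_j-1}}{2}^2 + \eta g(x)$, so comparing with the competitor $x_*$,
\begin{equation*}
\eta\, g(x_{k_j}) \le \eta\, g(x_*) + \tfrac12\NM{x_* - z_{k_j-1}}{2}^2 - \tfrac12\NM{x_{k_j} - z_{k_j-1}}{2}^2 = \eta\, g(x_*) + (x_* - x_{k_j})^{\top}(x_* + x_{k_j} - 2 z_{k_j-1}).
\end{equation*}
It then remains to show the cross term tends to $0$. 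Because $F$ is coercive and Proposition~\ref{pr:suffdesc} gives $F(x_{k+1}) \le \Delta_k \le \max_{t=1,\dots,q+1} F(x_t) < \infty$, the sequence $\{x_k\}$ is bounded; moreover, whenever $v_k = y_k$ it was selected precisely because $F(y_k) \le \Delta_k$, so $\{v_k\}$ is bounded too, and since $\nabla f$ is Lipschitz (hence bounded on bounded sets) the sequence $\{z_k\}$ is bounded. As $x_{k_j} \to x_*$, the factor $x_* - x_{k_j} \to 0$ while $x_* + x_{k_j} - 2 z_{k_j-1}$ stays bounded, so the cross term vanishes and $\limsup_j g(x_{k_j}) \le g(x_*)$, which closes all three cases.

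The main obstacle is the exact-prox case: upgrading the one-line optimality inequality to a true limit hinges on the boundedness of $\{z_k\}$, which itself rests on coercivity of $F$ together with the nonmonotone sufficient-decrease bound of Proposition~\ref{pr:suffdesc}. Everything else is bookkeeping. A minor point to handle carefully is the index shift $k_j \mapsto k_j - 1$ (the competitor argument is invoked at the previous iteration), but this is harmless because all the bounds above are uniform in $k$.
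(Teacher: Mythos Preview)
The paper does not give a separate proof of this proposition; it is simply attributed to \cite{attouch2013convergence}. That said, the argument you give for case~(i) is exactly the one the paper uses \emph{inside} the proof of Theorem~\ref{thm:exact}: compare $x_{k_j}$ against the competitor $x_*$ in the proximal objective to get $\limsup_j g(x_{k_j}) \le g(x_*)$, then combine with lower semicontinuity of $g$ and continuity of $f$. The only difference is that in the Theorem~\ref{thm:exact} setting the paper already has $v_{k_j}\to x_*$, so the cross terms vanish by convergence; you instead establish boundedness of $\{z_k\}$ via coercivity of $F$ and the nonmonotone descent bound, which is a slightly more robust route and is correctly justified. Your treatments of the continuous-$g$ and compact-indicator cases are standard and correct.

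One cosmetic slip: $\tfrac12\NM{x_* - z}{2}^2 - \tfrac12\NM{x_{k_j} - z}{2}^2 = \tfrac12(x_* - x_{k_j})^{\top}(x_* + x_{k_j} - 2z)$, so your displayed cross term is missing a factor $\tfrac12$; this does not affect the argument.
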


When the proximal step is exact or when $g$ is convex,
$\Gm{\cdot}$ has been used to measure the distance from optimality.
However, 
this is inappropriate 
when $g$ is nonconvex and the proximal step is inexact,
as the inexactness can no longer be directly controlled.
Instead,
we will measure optimality via $a_k  \equiv \NM{x_{k + 1} - v_k}{2}^2$.

\begin{proposition} \label{pr:ietncvx2}
(i) $\lim_{k \rightarrow \infty} a_k = 0$; and
(ii) $\min_{k = 1, \dots, K} a_{\phi(k)} \le \frac{2 (q + 1) c_1}{\delta K}$.
\end{proposition}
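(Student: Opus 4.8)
The plan is to mirror the proof of Proposition~\ref{pr:exact:rate}, replacing the decrease constant $\rho - L$ there by $\delta$ and the proximal residual $\NM{\Gm{v_k}}{2}^2$ by $a_k = \NM{x_{k+1} - v_k}{2}^2$ (the two coincide only for exact steps, which is exactly why we switch to $a_k$ in the inexact nonconvex case). The starting point is the sufficient-decrease guarantee \eqref{eq:sheother} required of the inexact step, together with the observation that the branch in steps~5--9 of Algorithm~\ref{alg:ours} always yields $F(v_k) \le \Delta_k$: if $v_k = y_k$ this is the test of step~5, and if $v_k = x_k$ it is immediate because $x_k$ is one of the terms in the max defining $\Delta_k$. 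Chaining the two gives, for every $k$,
\begin{align*}
F(x_{k+1}) \;\le\; \Delta_k - \frac{\delta}{2}\, a_k .
\end{align*}

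First I would check that $\{\Delta_k\}$ is non-increasing: since $F(x_{k+1}) \le \Delta_k$ and the index window defining $\Delta_{k+1}$, apart from the new index $k+1$, is contained in the one defining $\Delta_k$, every term in the max for $\Delta_{k+1}$ is $\le \Delta_k$, hence $\Delta_{k+1} \le \Delta_k$; as $\Delta_k \ge F(x_k) \ge \inf F$, the sequence converges. The key estimate is then obtained by picking, for each $k$, an index $\ell \in \{k+1,\dots,k+q+1\}$ with $F(x_\ell) = \Delta_{k+q+1}$, applying the displayed inequality at iteration $\ell-1$, and using $\Delta_{\ell-1} \le \Delta_k$ (monotonicity, since $\ell - 1 \ge k$):
\begin{align*}
\Delta_{k+q+1} \;=\; F(x_\ell) \;\le\; \Delta_{\ell-1} - \frac{\delta}{2}\,a_{\ell-1} \;\le\; \Delta_k - \frac{\delta}{2}\,a_{\ell-1},
\qquad \ell - 1 \in \{k,\dots,k+q\}.
\end{align*}
Thus $\min_{t \in \{k,\dots,k+q\}} a_t \le \tfrac{2}{\delta}\big(\Delta_k - \Delta_{k+q+1}\big)$.

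For part (ii) I would apply this with $k = k_i := i(q+1)+1$ for $i = 0,1,\dots$; the windows $\{k_i,\dots,k_i+q\}$ are exactly the consecutive length-$(q+1)$ blocks partitioning $\{1,2,\dots\}$, and for $\kappa_i := (i+1)(q+1)$ one checks that the step-4 window $\{\max(\kappa_i - q,1),\dots,\kappa_i\}$ is precisely block~$i$, so $a_{\phi(\kappa_i)} = \min_{t\in \text{block }i} a_t \le \tfrac{2}{\delta}(\Delta_{k_i} - \Delta_{k_{i+1}})$. Summing over $i = 0,\dots,I-1$ telescopes to $\tfrac{2}{\delta}(\Delta_{k_0} - \Delta_{k_I}) \le \tfrac{2}{\delta}(F(x_1) - \inf F) \le \tfrac{2 c_1}{\delta}$. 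Taking $I = \lfloor K/(q+1)\rfloor$ keeps all $\kappa_i \le K$, so $\min_{k \le K} a_{\phi(k)} \le \min_{i<I} a_{\phi(\kappa_i)} \le \tfrac{2 c_1}{\delta I} \le \tfrac{2(q+1) c_1}{\delta K}$. For part (i), since $\{\Delta_k\}$ converges it is Cauchy, hence $\Delta_k - \Delta_{k+q+1} \to 0$; the key estimate then forces $\min_{t\in\{k,\dots,k+q\}} a_t \to 0$, and since for $k>q$ this minimum equals $a_{\phi(k+q)}$, we conclude $a_{\phi(k)} \to 0$.

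The analytic content is light — everything reduces to telescoping a non-increasing sequence bounded below by $\inf F$ — so the main obstacle I expect is the combinatorial bookkeeping that extracts both claims from the single per-window estimate: verifying that the step-4 window at $\kappa_i$ is exactly the $i$-th block (so that $a_{\phi(\kappa_i)}$ is the block minimum, needed for (ii)) and that every sufficiently late length-$(q+1)$ window contains some $t$ with $a_t \le \tfrac{2}{\delta}(\Delta_k - \Delta_{k+q+1})$ whose right-hand side vanishes (needed for (i)). A minor point is that the stated constant in (ii) is exact when $q+1$ divides $K$ and otherwise absorbs a negligible correction.
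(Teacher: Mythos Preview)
Your proposal is correct and follows essentially the same route as the paper: both hinge on the per-window estimate $\Delta_{k+q+1} \le \Delta_k - \tfrac{\delta}{2}\min_{t\in\{k,\dots,k+q\}} a_t$ (the paper packages this as an auxiliary proposition, the $\delta$-analogue of Proposition~\ref{pr:app1}) and then telescope. The only difference is organizational. The paper sums the estimate over \emph{all} $k=1,\dots,K$, obtaining $\sum_{k=1}^K a_{\phi(k)} \le \tfrac{2(q+1)c_1}{\delta}$, from which~(ii) follows by $\min\le$ average with the exact constant for every $K$, and~(i) by summability. You instead first prove $\{\Delta_k\}$ is non-increasing, telescope over non-overlapping length-$(q+1)$ blocks, and treat~(i) separately via the Cauchy property of $\{\Delta_k\}$; this is why you incur the divisibility caveat on $K$. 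Either way works; the paper's overlapping sum is slightly slicker for the constant, your monotonicity-first route is a bit more transparent. (Incidentally, both the paper's proof and yours actually establish $a_{\phi(k)}\to 0$; the literal claim $a_k\to 0$ in~(i) appears to be a slip in the statement.)
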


When the proximal step is exact,
$x_{k + 1} = \Px{\eta g}{v_k - \eta \nabla f(v_k)}$,
and $a_k = \NM{\Gm{v_k}}{2}^2$.
Proposition~\ref{pr:ietncvx2}
then reduces to Proposition~\ref{pr:exact:rate} (but with a looser bound).

\begin{table*}[ht]
\centering
\small
\vspace{-5px}
\caption{Results on the \textit{MovieLens} data sets (CPU time in seconds). 
Here, RMSE is scaled by $10^{-1}$.
	Group (I) is based on convex nuclear norm regularization;
	group (II) on factorization model; and group (III) on nonconvex model
	\eqref{eq:promc}.}
\begin{tabular}{c c|ccc|ccc|ccc}
	\hline
	&              &      \multicolumn{3}{c|}{\textit{MovieLens-1M}}      &      \multicolumn{3}{c|}{\textit{MovieLens-10M}}       &       \multicolumn{3}{c}{\textit{MovieLens-20M}}        \\
	&              &          RMSE          & rank &       CPU time       &          RMSE          & rank &        CPU time        &          RMSE          & rank &        CPU time         \\ \hline
	\multirow{2}{*}{(I)} & active       &     8.20$\pm$0.01      &  68  &     50.5$\pm$1.6     &     8.14$\pm$0.01      & 101  &    1520.8$\pm$18.2     &     8.02$\pm$0.01      & 197  &    7841.9$\pm$666.3     \\ \cline{2-11}
	& ALT-Impute   &     8.18$\pm$0.01      &  68  &     34.0$\pm$1.1     &     8.14$\pm$0.01      & 101  &     821.7$\pm$34.5     &     8.01$\pm$0.01      & 197  &    3393.2$\pm$220.3     \\ \hline
	\multirow{2}{*}{(II)} & AltGrad      &     8.02$\pm$0.03      &  6   &     4.0$\pm$1.1      &     7.97$\pm$0.04      &  9   &     94.5$\pm$30.8      &     7.94$\pm$0.04      &  10  &     298.3$\pm$54.1      \\ \cline{2-11}
	& R1MP         &     8.53$\pm$0.02      &  13  & \textbf{1.3$\pm$0.2} &     8.52$\pm$0.04      &  23  & \textbf{58.8$\pm$11.0} &     8.54$\pm$0.02      &  26  & \textbf{139.2$\pm$23.7} \\ \hline
	\multirow{4}{*}{(III)} & FaNCL        & \textbf{7.88$\pm$0.01} &  5   &     12.5$\pm$0.9     & \textbf{7.79$\pm$0.01} &  8   &     703.5$\pm$18.3     & \textbf{7.84$\pm$0.03} &  9   &    2296.9$\pm$176.4     \\ \cline{2-11}
	& nmAPG        & \textbf{7.87$\pm$0.01} &  5   &     12.5$\pm$0.9     & \textbf{7.80$\pm$0.01} &  8   &     627.5$\pm$16.4     & \textbf{7.85$\pm$0.01} &  9   &    1577.9$\pm$103.2     \\ \cline{2-11}
	& niAPG(exact) & \textbf{7.87$\pm$0.01} &  5   &     11.1$\pm$0.8     & \textbf{7.79$\pm$0.01} &  8   &     403.1$\pm$19.6     & \textbf{7.84$\pm$0.01} &  9   &     1111.9$\pm$65.3     \\ \cline{2-11}
	& niAPG        & \textbf{7.87$\pm$0.01} &  5   &     2.7$\pm$0.3      & \textbf{7.79$\pm$0.01} &  8   &      90.2$\pm$2.6      & \textbf{7.85$\pm$0.01} &  9   &     257.6$\pm$33.4      \\ \hline
\end{tabular}
\label{tab:mvlens}
\end{table*}

\begin{table*}[ht]
	\centering
	\small
	\vspace{-15px}
	\caption{Number of proximal steps on the synthetic and recommender system data sets.}
	\label{tab:callpxrec}
	\begin{tabular}{c| c |c | c| c| c| c |c | c} \hline
		&             \multicolumn{3}{c|}{\textit{Synthetic}}              &                \multicolumn{3}{c|}{\textit{MovieLens}}                 & \multirow{2}{*}{\textit{Netflix}} & \multirow{2}{*}{\textit{Yahoo}} \\
		& $m=$500               & $m=$1000              & $m=$2000               & \textit{1M}           & \textit{10M}           & \textit{20M}           &                                   &  \\ \hline
		nmAPG     & 77                    & 104                   & 145                    & 95                    & 221                    & 236                    & 183                               & 579                             \\ \hline
		niAPG(exact) & 64 ($\downarrow \!\! 17\%$) & 85 ($\downarrow \!\! 18\%$) & 115 ($\downarrow \!\! 21\%$) & 82 ($\downarrow \!\! 13\%$) & 143 ($\downarrow \!\! 35\%$) & 165 ($\downarrow \!\! 31\%$) & 133 ($\downarrow \!\! 27\%$)            & 425 ($\downarrow \!\! 26\%$)          \\ \hline
		niAPG     & 64 ($\downarrow \!\! 17\%$) & 85 ($\downarrow \!\! 18\%$) & 115 ($\downarrow \!\! 21\%$)
		& 81 ($\downarrow \!\! 15\%$) & 140 ($\downarrow \!\! 36\%$) & 160 ($\downarrow \!\! 32\%$) & 132
		($\downarrow \!\! 28\%$)            & 413 ($\downarrow \!\! 29\%$)          \\ \hline
	\end{tabular}
	\vspace{-8px}
\end{table*}

\section{Experiments}
\label{sec:expt}

In this section,
we perform experiments when
$g$ 
is convex 
(Section~\ref{sec:tvmdl}) and nonconvex (Section~\ref{sec:colfilter}).

\subsection{Image Inpainting}
\label{sec:tvmdl}

The total variation (TV) model \cite{beck2009tv} has been popularly used in image processing.
Let $y \in \R^d$ be the vectorized input image and $x \in \R^d$ be the recovered one.
We consider the TV model with nonconvex log-sum-penalty  regularizer
\cite{candes2008enhancing}.
\begin{equation}
\min_x 
\frac{1}{2}\NM{M \odot (x - y)}{2}^2 
+   \lambda \sum_{i = 1}^d 
\kappa([ D_h x ]_i) + \kappa([ D_v x ]_i),
\label{eq:imgtv}
\end{equation}
where
$M \in \{0, 1\}^d$ is a mask such that $M_{ij}=1$ indicates that the corresponding pixel is observed, 
$D_h$ and $D_v$ are the horizontal and vertical partial derivative operators,
$\odot$ is the elementwise multiplication, and
$\kappa(\alpha) = \log(1 + |\alpha|)$.

As suggested in \cite{qyao2016icml}, 
\eqref{eq:imgtv}
can be transformed as the minimization of 
$f(x) + \lambda \TV{x}$,
where 
$ f(x) 
= 
\frac{1}{2}\NM{M \odot (x - y)}{2}^2 
- \lambda [ \TV{x}
+ \sum_{i = 1}^d \kappa([ D_h x ]_i) + \kappa([ D_v x ]_i) ]$
is nonconvex but smooth,
and $\TV{x} = \NM{D_h x}{1} + \NM{D_v x}{1}$ is the standard (convex) TV regularizer.
Thus,
we only need to handle the proximal step of the TV regularizer,
which will be computed numerically by solving
its dual
using L-BFGS 
\cite{beck2009tv}.

The following solvers on the transformed problems are compared:
(i) GDPAN \cite{zhong2014gradient}, which
performs
gradient descent with the proximal average;
(ii) nmAPG;
(iii) the proposed niAPG,
in which inexactness of the proximal step is controlled by decaying the duality gap $\varepsilon_k$ at a rate of $O(1/k^{1.5})$; and
(iv) the exact version of niAPG(exact), which simulates an exact proximal step with a small duality gap (${10}^{-4}$).
We do not compare
with the GD algorithm \cite{attouch2013convergence},
as its inexactness condition is difficult to check and it does not use acceleration.

As a further baseline, we compare with 
the convex TV model:
$\min_{x} \frac{1}{2}\NM{M \odot (x - y)}{2}^2 + \lambda (\NM{D_v x}{1} + \NM{D_h x}{1})$,
which is solved using ADMM \cite{boyd2011distributed}. 
We do not compare with
CCCP \cite{yuille2002concave}, which
is slow in practice
\cite{qyao2016icml}.


Experiments are performed on 
the ``Lena'' image\footnote{\url{http://www.cs.tut.fi/~foi/GCF-BM3D/images/image_Lena512rgb.png}}.
We normalize
the pixel values 
to $[0,1]$, and then
add Gaussian noise from $\mathcal{N}(0, 0.05)$.
$50\%$ of the pixels
are randomly sampled as observed.
For performance evaluation,
we report the 
CPU time
and 
root-mean-squared error
(RMSE) 
on the whole image.
The experiment is repeated five times.
Results are shown in 
Table~\ref{tab:tvimg}.\footnote{In all the tables, 
the boldface 
indicates the best and comparable results (according to the pairwise t-test with 95\% confidence).}
Figure~\ref{fig:imgtv} plots convergence of the objective.\footnote{
	Because of the lack of space,
	the plot for $\lambda = 0.01$
	is not shown.}
As can be seen, the nonconvex TV model has better RMSE than
the convex one. Among the nonconvex models,
niAPG is much faster, as
it only requires a small number
of cheap inexact proximal steps 
(Table~\ref{tab:callprox1}).

\begin{figure}[ht]
\centering
\subfigure[$\lambda = 0.02$.]
{\includegraphics[height = 0.18\textwidth]{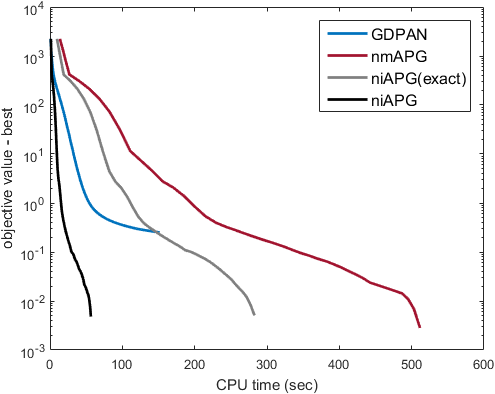}}\quad
\subfigure[$\lambda = 0.04$.]
{\includegraphics[height = 0.18\textwidth]{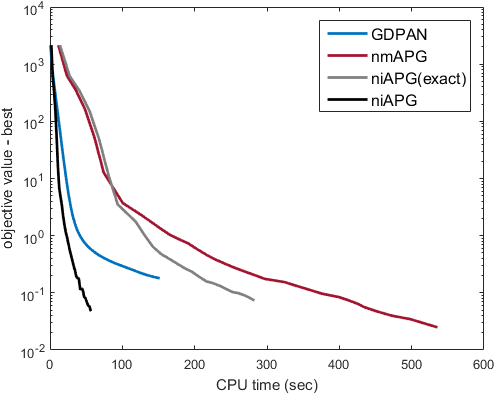}}
\vspace{-10px}
\caption{Objective value vs CPU time on the image data.}
\label{fig:imgtv}
\vspace{-10px}
\end{figure}

\begin{table}[ht]
\centering
\small
\vspace{-10px}
\caption{Number of proximal steps on image data. Number in brackets is the percentage reduction 
w.r.t. nmAPG.}
\label{tab:callprox1}
\begin{tabular}{c| c| c| c}
	\hline
	             & $\lambda = 0.01$      & $\lambda = 0.02$      & $\lambda = 0.04$      \\ \hline
	   nmAPG     & 87                    & 46                    & 43                    \\ \hline
	niAPG(exact) & 47 ($\downarrow \!\! 46\%$) & 35 ($\downarrow \!\! 24\%$) & 29 ($\downarrow \!\! 33\%$) \\ \hline
	   niAPG     & 57 ($\downarrow \!\! 35\%$) & 41 ($\downarrow \!\! 11\%$) & 28 ($\downarrow \!\! 35\%$) \\ \hline
\end{tabular}
\vspace{-10px}
\end{table}

\begin{figure*}[ht]
	\centering
	\subfigure[\textit{MovieLens-20M}.]
	{\includegraphics[height = 0.18\textwidth]{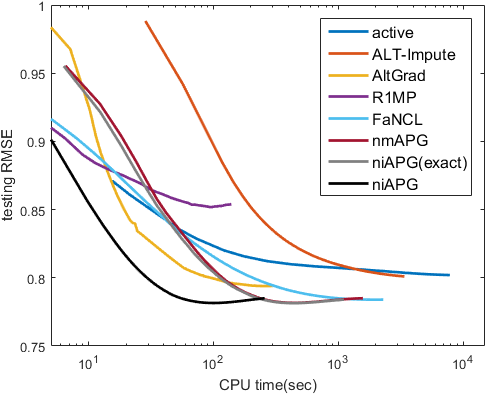} 
		\label{fig:netflix:20M}} 
	\qquad\qquad
	\subfigure[\textit{Netflix}.]
	{\includegraphics[height = 0.18\textwidth]{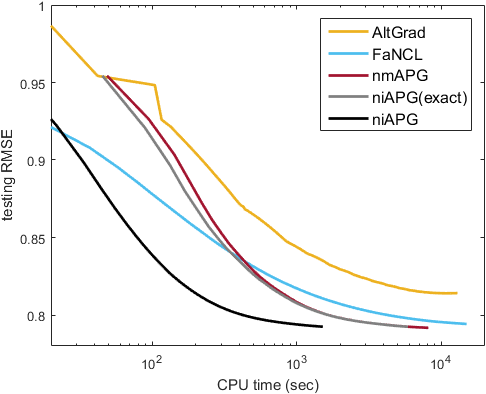}  
		\label{fig:netflix:netflix}} 
	\qquad\qquad
	\subfigure[\textit{Yahoo}.]
	{\includegraphics[height = 0.18\textwidth]{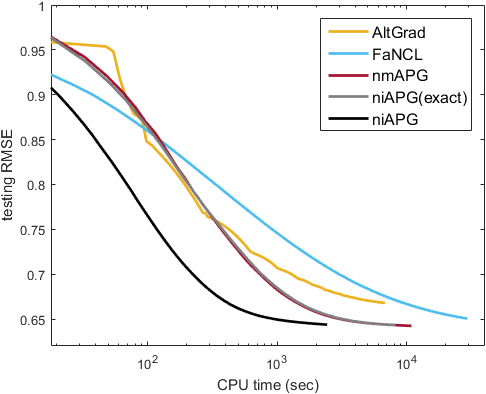}
		\label{fig:netflix:yahoo}}
	\vspace{-10px}
	\caption{Testing RMSE vs CPU time on the recommendation system data sets.}
	\label{fig:netflix}
	\vspace{-10px}
\end{figure*}

\subsection{Matrix Completion}
\label{sec:colfilter}

In this section, we consider matrix completion with a nonconvex low-rank regularizer.
As shown in \cite{canyi2014,yao2015fast},
it gives better performance than 
nuclear-norm based 
and 
factorization 
approaches.
The optimization problem can be formulated as
\begin{align}
\min_{\rank(X) \le r}  
\frac{1}{2} \NM{\SO{X_{ij}- O_{ij}}}{F}^2
+ \lambda \sum_{i = 1}^r \kappa\left( \sigma_i(X) \right),
\label{eq:promc}
\end{align}
where 
$O_{ij}$s are the observations,
$\Omega_{ij} = 1$ 
if 
$O_{i,j}$ is 
observed, and 0 otherwise,
$\sigma_i(X)$ is the $i$th leading singular value of $X$,
and $r$ is the desired rank.
The associated proximal step  
can be solved with rank-$r$ SVD  
\cite{canyi2014}.

\noindent
\textbf{Synthetic Data.}
The observed $m\times m$ matrix is generated as $O = 
U V + G$,  where 
the entries
of $U \in \R^{m \times k}, V \in \R^{k \times m}$ 
(with $k = 5$) are 
sampled i.i.d. from the normal distribution $\mathcal{N}(0,
1)$, and
entries of $G$ sampled from $\mathcal{N}(0, 0.1)$.
A total of $\|\Omega\|_1 = 2 m k \log(m)$ random entries
in $O$ are observed.  
Half of them are used for training, and the rest as validation
set.

In
the proposed
niAPG algorithm,
its proximal step is approximated by
using power method \cite{halko2011finding}, and inexactness of the proximal step is
monitored by condition~\eqref{eq:sheother}.  Its variant niAPG(exact) has 
exact
proximal steps 
computed by the Lancoz algorithm
\cite{larsen1998lanczos}. 
They are 
compared with the following solvers
on the nonconvex model \eqref{eq:promc}:
	(i) Iterative reweighted nuclear norm (IRNN)
	\cite{canyi2014} algorithm;
	(ii) Fast nonconvex low-rank learning (FaNCL) algorithm \cite{yao2015fast},
	using the power method to approximate the proximal step;
	(iii) nmAPG,
	in which the proximal step is exactly computed by the Lancoz algorithm.
We also compare with other matrix completion algorithms, including the well-known
(convex) nuclear-norm-regularized algorithms:
(i) active subspace selection  
\cite{hsieh2014nuclear}
and
(ii) ALT-Impute \cite{hastie2015matrix}.
We also compare with
state-of-the-art 
factorization models
(where the
rank 
is tuned by the validation set):
(i) R1MP \cite{wang2015rankone};
and (ii) state-of-the-art gradient descent
based AltGrad \cite{zhao2015nonconvex}.
We do not compare with the Frank-Wolfe algorithm \cite{zhang2012accelerated},
which has been shown to be slower
\cite{hsieh2014nuclear}.

Testing is performed on the non-observed 
entries (denoted
$\bar{\Omega}$).
Three measures are used
for performance evaluation:
(i) normalized mean squared error
(NMSE): $\sqrt{\NM{P_{\bar{\Omega}} (X - UV)}{F}^2} / \sqrt{\NM{P_{\bar{\Omega}} (UV) }{F}^2}$;
(ii) rank of $X$;
and 
(iii) training time.
Each experiment is repeated five times.

Table~\ref{tab:sythmatcomp}
shows
the performance.
Convergence for algorithms solving \eqref{eq:promc} is shown\footnote{Because of the lack of space, the plot for $m = 500$ is not shown.}
in Figure~\ref{fig:sythmatcomp}.
As has also been 
observed in \cite{canyi2014,yao2015fast},
nonconvex regularization yields lower NMSE than 
nuclear-norm-regularized 
and factorization
models.
Again, niAPG is the fastest. Its speed is comparable with AltGrad, but is more accurate.
Table~\ref{tab:callpxrec}
compares 
the numbers
of proximal steps.
As can be seen, 
both nmAPG(exact) and nmAPG require significantly fewer proximal steps.

\noindent
\textbf{Recommender Systems.}
We first consider the 
\textit{MovieLens} data sets
(Table~\ref{tab:recsys:dataset}),
which contain ratings of different users on movies or musics.
We follow the setup in \cite{wang2015rankone,yao2015fast},  
and use $50\%$ of the observed ratings for training, $25\%$ for validation and the rest for testing.
For performance evaluation, we use the root mean squared error on the test set $\bar{\Omega}$:
$\text{RMSE} = \sqrt{\NM{P_{\bar{\Omega}} (O - X)}{F}^2}/\sqrt{\|\bar{\Omega}\|_1}$,
rank of the recovered matrix $X$, and CPU time.
The experiment is repeated five times.

\begin{table}[ht]
	\centering
	\small
	\vspace{-15px}
	\caption{Recommender system data sets used.}
	\begin{tabular}{c c |c|c|c}
		\hline
		&              & \#users   & \#items &  \#ratings  \\ \hline
		\multirow{3}{*}{\textit{MovieLens}} & \textit{1M}  & 6,040     &  3,449  &   999,714   \\ \cline{2-5}
		& \textit{10M} & 69,878    & 10,677  & 10,000,054  \\ \cline{2-5}
		& \textit{20M} & 138,493   & 26,744  & 20,000,263  \\ \hline
		\multicolumn{2}{c|}{\textit{Netflix}}    & 480,189   & 17,770  & 100,480,507 \\ \hline
		\multicolumn{2}{c|}{\textit{Yahoo}}     & 1,000,990 & 624,961 & 262,810,175 \\ \hline
	\end{tabular}
	\vspace{-5px}
	\label{tab:recsys:dataset}
\end{table}

Table~\ref{tab:mvlens} shows the
recovery performance.
IRNN is not compared as it is too slow.
Again, the nonconvex model 
consistently outperforms nuclear-norm-regularized
and factorization models.
R1MP is the fastest, but its
recovery performance
is poor.
Figure~\ref{fig:netflix:20M} shows the convergence,
and Table~\ref{tab:callpxrec}
compares the numbers of proximal steps.
niAPG(exact) is faster than nmAPG due to the use of fewer proximal steps.
niAPG is even faster with the use of inexact proximal steps.

\begin{figure}[ht]
	\centering
	\subfigure[$m = 1000$.]
	{\includegraphics[height = 0.18\textwidth]{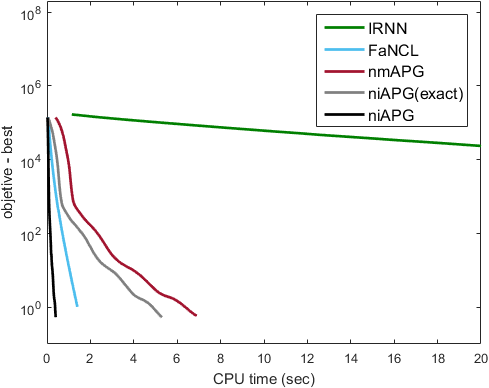}}\quad
	\subfigure[$m = 2000$.]
	{\includegraphics[height = 0.18\textwidth]{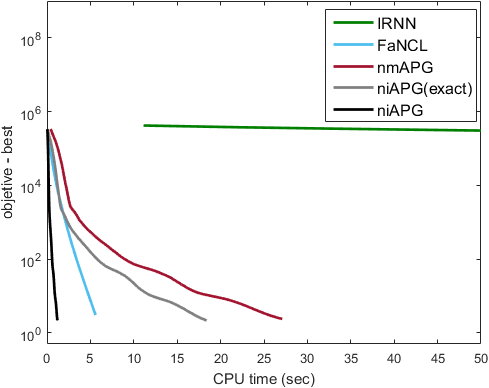}}
	\vspace{-10px}
	\caption{Objective value vs CPU time on the synthetic matrix completion data set.}
	\label{fig:sythmatcomp}
	\vspace{-5px}
\end{figure}

Finally, we perform experiments on the large 
\textit{Netflix}
and \textit{Yahoo}
data sets
(Table~\ref{tab:recsys:dataset}).
We randomly use $50\%$ 
of the observed ratings 
for training,
$25\%$ for validation and the rest for testing.
Each experiment is repeated five times. 
We do not compare with nuclear-norm-regularized methods 
as they yield higher rank and RMSE than others.
Table~\ref{tab:netflix}
shows 
the recovery performance,
and Figures~\ref{fig:netflix:netflix}, \ref{fig:netflix:yahoo}
show the convergence.
Again, niAPG is the fastest and most accurate.

\begin{table}[ht]
\centering
\small
\vspace{-10px}
\caption{Results on the \textit{Netfix} and \textit{Yahoo} data sets.
Here, RMSE is scaled by $\times 10^{-1}$.}
\begin{tabular}{c c|ccc}
	\hline
                                &              &          RMSE          & rank &    CPU time (min)     \\ \hline
	\multirow{5}{*}{\textit{Netflix}} & AltGrad      &     8.16$\pm$0.02      &  15  &     221.7$\pm$5.6     \\ \cline{2-5}
                                & FaNCL        &     7.94$\pm$0.01      &  13  &    240.8$\pm$22.7     \\ \cline{2-5}
                                & nmAPG        & \textbf{7.92$\pm$0.01} &  13  &     132.8$\pm$2.1     \\ \cline{2-5}
                                & niAPG(exact) & \textbf{7.92$\pm$0.01} &  13  &     97.7$\pm$1.8      \\ \cline{2-5}
                                & niAPG        & \textbf{7.92$\pm$0.01} &  13  & \textbf{25.2$\pm$0.6} \\ \hline
 \multirow{5}{*}{\textit{Yahoo}}  & AltGrad      &     6.69$\pm$0.01      &  14  &     112.9$\pm$4.2     \\ \cline{2-5}
                                & FaNCL        & \textbf{6.54$\pm$0.01} &  9   &    487.6$\pm$32.0     \\ \cline{2-5}
                                & nmAPG        & \textbf{6.53$\pm$0.01} &  9   &     184.3$\pm$6.3     \\ \cline{2-5}
                                & niAPG(exact) & \textbf{6.53$\pm$0.01} &  9   &     140.7$\pm$5.8     \\ \cline{2-5}
                                & niAPG        & \textbf{6.53$\pm$0.01} &  9   & \textbf{38.7$\pm$2.3} \\ \hline
\end{tabular}
\label{tab:netflix}
\vspace{-10px}
\end{table}


\section{Conclusion}

In this paper,
we proposed an efficient
accelerated proximal gradient algorithm for nonconvex problems.
Compared with the state-of-the-art \cite{li2015accelerated},
the proximal step can be inexact
and the number of proximal steps required is significantly reduced,
while still ensuring convergence to a critical point.
Experiments on 
image denoising and matrix completion problems
show that the proposed algorithm has comparable (or even better) prediction performance as the state-of-the-art,
but is much faster.


\section*{Acknowledgments}

This research project is partially funded by Microsoft Research Asia
and the Research Grants Council of the Hong Kong Special Administrative Region (Grant 614513).
The first author would thank helpful discussion and suggestions from Lu Hou and Yue Wang.

{
\small
\bibliographystyle{named}
\bibliography{bib}
}


\appendix

\section{Proof}

\subsection{Proposition~\ref{pr:suffdesc}}

From Proposition~\ref{pr:exactpx},
if the proximal step is exact at Algorithm~\ref{alg:ours}, we have 
\begin{align*}
F(x_{k + 1})
\le F(v_k) - \frac{\rho - L}{2}\NM{x_{k + 1} - v_k}{2}^2.
\end{align*}
Due to the condition at step~5,
two cases are considered. 
Let $\alpha = \rho - L$, then
\begin{itemize}
	\item If $v_k = x_k$, then $\Delta_k < F(y_k)$ and
	\begin{align}
	F(x_{k + 1}) 
	& \le F(x_k) - \frac{\alpha}{2}\NM{x_{k + 1} - x_k}{2}^2
	\notag \\
	& \le \Delta_k - \frac{\alpha}{2}\NM{x_{k + 1} - x_k}{2}^2.
	\label{eq:app1}
	\end{align}
	
	\item If $v_{k} = y_k$, then $F(y_k) \le \Delta_k$ and
	\begin{align}
	F(x_{k + 1}) 
	\le F(y_k) - \frac{\alpha}{2}\NM{x_{k + 1} - y_k}{2}^2
	\label{eq:app2}
	\end{align}
\end{itemize} 
By combining \eqref{eq:app1} and \eqref{eq:app2},
we have
\begin{align*}
F(x_{k + 1})  
\le \min(F(y_k), \Delta_k) - \frac{\alpha}{2}\NM{x_{k + 1} - y_k}{2}^2.
\end{align*}

\subsection{Proposition~\ref{pr:funcvale}}

Let $\varsigma(w) = ( w - v_k)^{\top} \nabla f(v_k) 
+ \frac{\rho}{2} \NM{w - v_k}{2}^2
+ g(w)$. 
We have
\begin{eqnarray}
\hat{x} & = & \arg\min_w h_{\frac{1}{\rho}g}(w) = \arg\min_w \varsigma(w), \label{eq:temp13} \\
\varsigma(w) & = & \rho h_{\frac{1}{\rho}g}(w) - \frac{1}{\rho}\NM{\nabla f(v_k)}{2}^2.
\label{eq:temp14}
\end{eqnarray}
From \eqref{eq:temp13}, we have
\begin{align}
\varsigma(\hat{x}) 
& = \langle  \hat{x} - v_k, \nabla f(v_k) \rangle 
+ \frac{\rho}{2} \NM{\hat{x} - v_k}{2}^2
+ g(\hat{x})
\notag \\
& \le g(v_k).
\label{eq:temp15}
\end{align}
As $h_{\frac{1}{\rho}g}(x_{k + 1}) - h_{\frac{1}{\rho}g}(\hat{x}) \le \varepsilon_k$, 
from \eqref{eq:temp14} (note that $\NM{\nabla f(v_k)}{2}^2$ is a constant), we have
\begin{align*}
\varsigma(x_{k + 1}) - \varsigma(\hat{x})
= \rho ( h(x_{k + 1}) - h(\hat{x}) )
\le \rho \epsilon
\end{align*}
With \eqref{eq:temp15},
we have
$\varsigma(x_{k + 1}) \le \rho \varepsilon_k + \varsigma(\hat{x}) \le g(v_k) + \rho \varepsilon_k$,
and then
\begin{align}
(x_{k + 1} - v_k)^{\top} \nabla f(v_k)
& + \frac{\rho}{2} \NM{x_{k + 1} - v_k}{2}^2
+ g(x_{k + 1})
\notag \\
& \le g(v_k) + \rho \varepsilon_k.
\label{eq:app4}
\end{align}
As $f$ is $L$-Lipschitz smooth,
\begin{align*}
f(x_{k + 1})
\le f(v_k) + (x_{k + 1} - v_k)^{\top} \nabla f(v_k)
+ \frac{L}{2}\NM{x_{k + 1} - v_k}{2}^2.
\end{align*}
Combining with \eqref{eq:app4},
we obtain
\begin{align*}
& f(x_{k + 1}) 
+ \frac{\rho}{2} \NM{x_{k + 1} - v_k}{2}^2
+ g(x_{k + 1})
\\
& \le 
f(v_k) + \frac{L}{2}\NM{x_{k + 1} - v_k}{2}^2
+ g(v_k) + \rho \varepsilon_k.
\end{align*}
Thus, 
$F(x_{k + 1}) \le F(v_k) - \frac{\rho - L}{2} \NM{x_{k + 1} - v_k}{2}^2 + \rho \varepsilon_k$.

\subsection{Theorem~\ref{thm:exact}}

We first introduce one proposition.

\begin{proposition} \label{pr:app1}
	Let $\phi(k) = \arg\min_{t = \max(1, k - q), \dots, k} \NM{x_{t + 1} - v_t}{2}^2$,
	then 
	$\Delta_{k + q + 1}
	\le \Delta_k
	- \frac{\rho - L}{2} \NM{x_{\phi(k) + 1} - v_{\phi(k)}}{2}^2$.
\end{proposition}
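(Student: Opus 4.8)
The plan is to combine the moving--window structure of $\Delta_k$ with the one--step decrease furnished by Proposition~\ref{pr:suffdesc}. Throughout I work with exact proximal steps, so Proposition~\ref{pr:suffdesc} applies at every iteration; in particular $F(x_{k+1})\le\Delta_k$ and $F(x_{k+1})\le\Delta_{k}-\frac{\rho-L}{2}\NM{x_{k+1}-v_k}{2}^{2}$.

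First I would record the elementary monotonicity fact that $\{\Delta_k\}$ is non-increasing. Indeed, the index set $\{\max(1,k+1-q),\dots,k\}$ appearing in the definition of $\Delta_{k+1}$ is contained in $\{\max(1,k-q),\dots,k\}$, so the maximum of $F$ over the former is at most $\Delta_k$; together with $F(x_{k+1})\le\Delta_k$ this gives $\Delta_{k+1}=\max\bigl(\max_{t=\max(1,k+1-q),\dots,k}F(x_t),\,F(x_{k+1})\bigr)\le\Delta_k$, and by induction $\Delta_{j}\le\Delta_k$ whenever $j\ge k$.

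Next I would unfold $\Delta_{k+q+1}$. Its defining window is the block of $q+1$ consecutive indices $\{k+1,\dots,k+q+1\}$ (using $\max(1,k+1)=k+1$ for $k\ge1$). For each $j$ in this block, apply Proposition~\ref{pr:suffdesc} at iteration $j-1$ to get $F(x_j)\le\Delta_{j-1}-\frac{\rho-L}{2}\NM{x_j-v_{j-1}}{2}^{2}$, and then invoke the monotonicity above (since $j-1\ge k$) to replace $\Delta_{j-1}$ by $\Delta_k$, obtaining $F(x_j)\le\Delta_k-\frac{\rho-L}{2}\NM{x_j-v_{j-1}}{2}^{2}$. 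Because the subtracted terms are nonnegative, taking the maximum over $j\in\{k+1,\dots,k+q+1\}$ yields $\Delta_{k+q+1}\le\Delta_k-\frac{\rho-L}{2}\min_{j}\NM{x_j-v_{j-1}}{2}^{2}$. Re-indexing $j=t+1$, the minimum runs over a block of $q+1$ consecutive values of $t$, and by the definition of $\phi$ this minimum is exactly $\NM{x_{\phi(k)+1}-v_{\phi(k)}}{2}^{2}$, which is the claimed bound.

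The one place requiring care is the index bookkeeping: aligning the window of the moving maximum $\Delta_{k+q+1}$ with the window over which $\phi$ takes its argmin, dealing with the truncation at index $1$ when $k$ is small, and making sure the monotonicity of $\Delta$ is used in the right direction (an earlier window dominates a later one). Beyond Proposition~\ref{pr:suffdesc} there is no analytic content here — the argument is purely the combinatorics of the nonmonotone acceptance rule — so I expect this step, not any estimate, to be the only subtlety.
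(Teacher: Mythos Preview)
Your proposal is correct and follows essentially the same route as the paper: apply Proposition~\ref{pr:suffdesc} at each of the $q+1$ iterations in the block, replace every $\Delta_{j-1}$ by $\Delta_k$ using the nonincrease of $\{\Delta_k\}$, and then take the maximum over the block. The only cosmetic difference is that you isolate the monotonicity $\Delta_{k+1}\le\Delta_k$ as a preliminary lemma, whereas the paper folds it into a short induction (bounding $\Delta_{k+1}\le\max(F(x_{k+1}),\Delta_k)\le\Delta_k$ at each step); the content is identical, and your observation that the only subtlety is the index bookkeeping between the $\Delta_{k+q+1}$ window and the $\phi$ window is exactly right.
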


\begin{proof}
	From Proposition~\ref{pr:suffdesc},
	we have
	\begin{align*}
	F(x_{k + 1})
	& \le 
	\min\left( F(y_k), \Delta_k \right) 
	- \frac{\rho - L}{2} \NM{x_{k + 1} - v_k}{2}^2
	\\
	& \le \Delta_k 
	- \frac{\rho - L}{2} \NM{x_{k + 1} - v_k}{2}^2.
	\end{align*}
	Thus, $F(x_{k + 1}) \le \Delta_k$. Then  
	\begin{align*}
	F(x_{k + 2})
	& \le \Delta_{k + 1} 
	- \frac{\rho - L}{2} \NM{x_{k + 2} - v_{k + 1}}{2}^2
	\\
	& \le \max(F(x_{k + 1}), \Delta_k)
	- \frac{\rho - L}{2} \NM{x_{k + 2} - v_{k + 1}}{2}^2
	\\
	& \le \Delta_k
	- \frac{\rho - L}{2} \NM{x_{k + 2} - v_{k + 1}}{2}^2.
	\end{align*}
	Inducting for $t = 0, \dots, q$, we have
	\begin{align*}
	F(x_{k + 1 + t})
	\le \Delta_k
	- \frac{\rho - L}{2} \NM{x_{k + 1 + t} - v_{k + t}}{2}^2.
	\end{align*}
	Recall that $\Delta_k = \max_{t = k - q, \dots, k} F(x_t)$, thus
	\begin{align*}
	\Delta_{k + q + 1}
	& = \max_{t = k + 1, \dots, k + 1 + q}
	F(x_{t})
	\\
	& \le \Delta_k
	- \min_{t = k, \dots, k + 1}\frac{\rho - L}{2} \NM{x_{t + 1} - v_{t}}{2}^2.
	\end{align*}
	Let $\phi(k) = \arg\min_{t = k, \dots, k + q} \NM{x_{t + 1} - v_t}{2}^2$,
	and then we get the proposition. 
\end{proof}

Summing over $k = 1, \dots, K$, then
\begin{align}
\sum_{t = 0}^{q}
&
\Delta_{t + 1} - \Delta_{K + t - q + 1}
\notag \\
& \ge \frac{\rho - L}{2} \sum_{k = 1}^K \NM{x_{\phi(k) + 1} - v_{\phi(k)}}{2}^2.
\label{eq:app3}
\end{align}
As $\inf F > - \infty$,
let $K \rightarrow \infty$,
then 
\begin{align}
\sum_{t = 0}^{q}
\Delta_{t + 1} - \Delta_{K + t - q + 1}
& \le 
\sum_{t = 0}^{q}
(\Delta_{t + 1} - \inf F)
\label{eq:app5} \\
& \le (q + 1) \left( \max_{t = 1, \dots, q + 1}F(x_t) - \inf F \right) 
\notag
\end{align}
Let 
$c_1 = (q + 1) \left( \max_{t = 1, \dots, q + 1}F(x_t) - \inf F \right)$,
and it is easy to see $c_1 < \infty$ is a finite number.
Thus, from \eqref{eq:app3}, we have
\begin{align}
\lim_{k \rightarrow \infty}
\NM{x_{\phi(k) + 1} - v_{\phi(k)}}{2} = 0,
\label{eq:app10}
\end{align}
which means $\lim_{k \rightarrow \infty} x_{\phi(k) + 1} = v_{\phi(k)}$,
and both sequence $\{ x_k \}$ and $\{ v_k \}$ are bounded,
and then both have at least one limit point.

Let $\{v_{k_j}\}$ be a subsequence of $\{ v_{k_j} \}$ and $x_* = \lim_{j \rightarrow \infty} x_{k_j + 1} = \lim_{k_j \rightarrow\infty} v_{k_j}$ be a limit point,
then
\begin{align*}
\lim_{k_j \rightarrow \infty} x_{k_j + 1} 
& = \lim_{j\rightarrow\infty} v_{k_j} 
\\
& = \lim_{j\rightarrow\infty} \Px{\eta g}{v_{k_j} - \eta \nabla f(v_{k_j})} = x_*.
\end{align*}
Besides, 
as $x_{k_j + 1}$ is generated from proximal step on $v_{k_j}$, then
\begin{align*}
& (x_{k_j + 1} - v_{k_j})^{\top} \nabla f(v_{k_j})
+ \frac{\rho}{2} \NM{x_{k_j + 1} - v_{k_j}}{2}^2 + g(x_{k_j + 1})
\\
& \le 
(x_* - v_{k_j})^{\top} \nabla f(v_{k_j})
+ \frac{\rho}{2} \NM{x_* - v_{k_j}}{2}^2 + g(x_*)
\end{align*}
Thus, 
\begin{align}
\lim_{j \rightarrow \infty} \sup g(x_{k_j + 1}) \le g(x_*).
\label{eq:app14}
\end{align}
As $g$ is lower semicontinuous, we have
\begin{align}
\lim_{k_j \rightarrow \infty} g(x_{k_j + 1}) \ge g(x_*).
\label{eq:app15}
\end{align}
Combining \eqref{eq:app14} and \eqref{eq:app15},
we have
\begin{align*}
\lim_{k_j \rightarrow \infty} F(x_{k_j + 1}) = F(x_*),
\end{align*}
and then with Lemma~\ref{lem:prox},
$x_*$ is a critical point of \eqref{eq:compfunc}.

\subsection{Proposition~\ref{pr:exact:rate}}

Note that $\Gm{ v_{\phi(k)} } = x_{\phi(k) + 1} - v_{\phi(k)}$.
Using \eqref{eq:app10},
we have
\begin{align*}
\lim_{k \rightarrow \infty} \NM{ \Gm{v_{\phi(k)}} }{2}^2 = 0.
\end{align*}
Then,
using \eqref{eq:app3} and \eqref{eq:app5}, we have
\begin{align*}
\sum_{k = 1}^K \NM{x_{\phi(k) + 1} - v_{\phi(k)}}{2}^2
\le \frac{2 c_1}{\rho - L}
\end{align*}
Thus,
\begin{align*}
& \min_{k = 1, \dots, K} \NM{\Gm{ v_{\phi(k)} }}{2}^2 
\\
& = \min_{k = 1, \dots, K} \NM{x_{\phi(k) + 1} - v_{\phi(k)}}{2}^2
\\
& \le
\frac{1}{K} \sum_{k = 1}^K \NM{x_{\phi(k) + 1} - v_{\phi(k)}}{2}^2
\le 
\frac{2 c_1}{(\rho - L) K}.
\end{align*}

\subsection{Theorem~\ref{thm:conv}}

We first extend Proposition~\ref{pr:app1} to Proposition~\ref{pr:app2} here.

\begin{proposition} \label{pr:app2}
	$\Delta_{k + q + 1}
	\le \Delta_k
	- \frac{\rho - L}{2} \NM{x_{\phi(k) + 1} - v_{\phi(k)}}{2}^2
	+ \rho \sum_{t = k}^{k + q} \varepsilon_t$.
\end{proposition}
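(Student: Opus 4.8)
The plan is to follow the proof of Proposition~\ref{pr:app1} verbatim at the structural level, substituting the inexact sufficient-decrease estimate (Proposition~\ref{pr:funcvale}) for the exact one (Proposition~\ref{pr:suffdesc}) and tracking the duality-gap terms $\varepsilon_t$ through the nonmonotone window. The first step is the elementary observation that $F(v_k)\le\Delta_k$ in either branch of step~5: if $v_k=x_k$, then $F(x_k)\le\Delta_k$ because $\Delta_k=\max_{t=\max(1,k-q),\dots,k}F(x_t)$ already contains $F(x_k)$; if $v_k=y_k$, then $F(y_k)\le\Delta_k$ because the test at step~5 succeeded. Combining this with Proposition~\ref{pr:funcvale} gives, for every $k$,
\begin{align*}
F(x_{k+1})\le\Delta_k-\frac{\rho-L}{2}\NM{x_{k+1}-v_k}{2}^2+\rho\,\varepsilon_k .
\end{align*}

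Next I would prove by induction on $t=0,1,\dots,q$ that
\begin{align*}
F(x_{k+1+t})\le\Delta_k-\frac{\rho-L}{2}\NM{x_{k+1+t}-v_{k+t}}{2}^2+\rho\sum_{s=k}^{k+t}\varepsilon_s .
\end{align*}
The base case $t=0$ is the inequality displayed above. For the inductive step I would first bound $\Delta_{k+t}$: its defining window $\{\max(1,k+t-q),\dots,k+t\}$ splits into ``old'' indices $\le k$, on which $F(x_j)\le\Delta_k$ (they lie in the window defining $\Delta_k$ since $k+t-q\ge k-q$), and ``new'' indices $k+1,\dots,k+t$, on which the induction hypothesis gives $F(x_j)\le\Delta_k+\rho\sum_{s=k}^{k+t-1}\varepsilon_s$; hence $\Delta_{k+t}\le\Delta_k+\rho\sum_{s=k}^{k+t-1}\varepsilon_s$. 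Feeding this together with $F(v_{k+t})\le\Delta_{k+t}$ (same branching argument, now at iteration $k+t$) into Proposition~\ref{pr:funcvale} applied at iteration $k+t$ yields the estimate for $t$.

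Finally, since $\Delta_{k+q+1}=\max_{t=0,\dots,q}F(x_{k+1+t})$, I would apply the $t$-indexed estimate to each term, discard every decrease term except the smallest, enlarge $\sum_{s=k}^{k+t}\varepsilon_s$ to $\sum_{t=k}^{k+q}\varepsilon_t$, and set $\phi(k)=\arg\min_{t=k,\dots,k+q}\NM{x_{t+1}-v_t}{2}^2$ (the shifted form of $\phi$ already used in the proof of Proposition~\ref{pr:app1}). This produces exactly the claimed bound $\Delta_{k+q+1}\le\Delta_k-\frac{\rho-L}{2}\NM{x_{\phi(k)+1}-v_{\phi(k)}}{2}^2+\rho\sum_{t=k}^{k+q}\varepsilon_t$.

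The only delicate point I anticipate is the bookkeeping around the sliding-window maximum: one must verify that each $\Delta_{k+t}$ absorbs at most $\rho\sum_{s=k}^{k+t-1}\varepsilon_s$ of accumulated error and no more, since any double counting of the gaps would inflate the final estimate. Once the induction is set up so that the error budget $\rho\sum_{s=k}^{k+t}\varepsilon_s$ is never re-used across iterates, the argument reduces to the same telescoping as in the exact case.
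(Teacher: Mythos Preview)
Your proposal is correct and follows the same approach as the paper's own proof: start from Proposition~\ref{pr:funcvale} combined with $F(v_k)\le\Delta_k$, induct over $t=0,\dots,q$ to push $F(x_{k+1+t})$ below $\Delta_k$ minus the decrease term plus the accumulated $\rho\sum\varepsilon_s$, and then take the maximum over the window. Your induction is in fact slightly more carefully stated than the paper's (you carry the tight error budget $\rho\sum_{s=k}^{k+t}\varepsilon_s$ at step $t$ and explicitly split the $\Delta_{k+t}$ window into old and new indices, whereas the paper just writes $\Delta_{k+1}\le\max(F(x_{k+1}),\Delta_k)$ and leaves the general step implicit), but the argument is identical in substance.
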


\begin{proof}
	From Proposition~\ref{pr:funcvale},
	we have
	\begin{align*}
	F(x_{k + 1})
	& \le 
	\min\left( F(y_k), \Delta_k \right) 
	- \frac{\rho - L}{2} \NM{x_{k + 1} - v_k}{2}^2 + \rho \varepsilon_k
	\\
	& \le \Delta_k 
	- \frac{\rho - L}{2} \NM{x_{k + 1} - v_k}{2}^2
	+ \rho \varepsilon_k.
	\end{align*}
	Thus, $F(x_{k + 1}) \le \Delta_k + \rho \varepsilon_k$. Then  
	\begin{align*}
	& F(x_{k + 2})
	\\
	& \le \Delta_{k + 1} 
	- \frac{\rho - L}{2} \NM{x_{k + 2} - v_{k + 1}}{2}^2
	+ \rho \varepsilon_{k + 1}
	\\
	& \le \max(F(x_{k + 1}), \Delta_k)
	- \frac{\rho - L}{2} \NM{x_{k + 2} - v_{k + 1}}{2}^2
	+ \rho \varepsilon_{k + 1}
	\\
	& \le \Delta_k
	- \frac{\rho - L}{2} \NM{x_{k + 2} - v_{k + 1}}{2}^2
	+\sum_{t = k}^{k + 1}\varepsilon_t.
	\end{align*}
	Inducting for $t = 0, \dots, q$, we have
	\begin{align*}
	F(x_{k + 1 + t})
	\le \Delta_k
	- \frac{\rho - L}{2} \NM{x_{k + 1 + t} - v_{k + t}}{2}^2
	+ \rho \sum_{t = k}^{k + q}\varepsilon_t.
	\end{align*}
	Recall that $\Delta_k = \max_{t = k - q, \dots, k} F(x_t)$, then
	\begin{align*}
	\Delta_{k + q + 1}
	& = \max_{t = k + 1, \dots, k + 1 + q}
	F(x_{t})
	\\
	& \le \Delta_k
	- \min_{t = k, \dots, k + 1}\frac{\rho - L}{2} \NM{x_{t + 1} - v_{t}}{2}^2
	+ \rho \sum_{t = k}^{k + q}\varepsilon_t.
	\end{align*}
	Thus,
	we get the proposition.
\end{proof}

Using Proposition~\ref{pr:app2},
sum over $k = 1, \dots, K$, we have
\begin{align}
& \sum_{t = 0}^{q}
\Delta_{t + 1} - \Delta_{K + t - q + 1}
\notag \\
& \ge \frac{\rho - L}{2} \sum_{k = 1}^K \NM{x_{\phi(k) + 1} - v_{\phi(k)}}{2}^2
- \rho \sum_{t = k}^{\min(k + q, K)} \varepsilon_t
\notag \\
& \ge \frac{\rho - L}{2} \sum_{k = 1}^K \NM{x_{\phi(k) + 1} - v_{\phi(k)}}{2}^2
- \rho \sum_{t = k}^{k + q} \varepsilon_t
\notag \\
& \ge \frac{\rho - L}{2} \sum_{k = 1}^K \NM{x_{\phi(k) + 1} - v_{\phi(k)}}{2}^2
-  q \rho \sum_{k = 1}^{K} \varepsilon_k
\notag \\
& \ge \frac{\rho - L}{2} \sum_{k = 1}^K \NM{x_{\phi(k) + 1} - v_{\phi(k)}}{2}^2
- q \rho c.
\label{eq:app6}
\end{align}
where the last inequality comes from $\sum_{k = 1}^{\infty} \varepsilon_k \le c$,
thus
\begin{align}
& \sum_{k = 1}^K \NM{x_{\phi(k) + 1} - v_{\phi(k)}}{2}^2
\notag \\
& \le \frac{2}{\rho - L} \sum_{t = 0}^{q}
\left( \Delta_{t + 1} - \Delta_{K + t - q + 1} + \rho c \right) 
\notag \\
& \le \frac{2}{\rho - L} \sum_{t = 0}^{q}
\left( \Delta_{t + 1} - \inf F + \rho c \right) 
\notag \\
& \le \frac{2}{\rho - L} \sum_{t = 0}^{q}
\left( F(x_1) - \inf F + \rho c \right).
\label{eq:app7}
\end{align}
Let $c_2 = F(x_1) - \inf F + \rho c$.
As $c_2 < \infty$,
then
\begin{align}
\lim_{k \rightarrow \infty}
\NM{x_{\phi(k) + 1} - v_{\phi(k)}}{2} = 0,
\label{eq:app11}
\end{align}
which means $\lim_{k \rightarrow \infty} x_{\phi(k) + 1} = v_{\phi(k)}$,
and both sequence $\{ x_k \}$ and $\{ v_k \}$ are bounded
and both have at least one limit point.

Let $x_*$ be a limit point,
and $\{v_{k_j}\}$ be a subsequence of $\{ v_k \}$ such that $\lim_{k_j \rightarrow \infty} x_{k_j + 1}= \lim_{j \rightarrow \infty} v_{k_j} = x_*$.
As $\lim_{k_j \rightarrow \infty} \varepsilon_{k_j} = 0$, 
on step~11 of Algorithm~\ref{alg:ours}, we have
\begin{align*}
\lim_{k_j \rightarrow \infty} x_{k_j + 1} 
& = \lim_{k_j \rightarrow \infty} \text{inexact } \Px{\eta g}{z_{k_j}}
\\
& = \lim_{k_j \rightarrow \infty} \Px{\eta g}{z_{k_j}},
\\
& = \lim_{k_j \rightarrow \infty} \Px{\eta g}{v_{k_j} - \eta \nabla f(v_{k_j})},
\end{align*}
which shows 
\begin{align}
x_* = \Px{\eta g}{x_* - \eta \nabla f(x_*)}.
\label{eq:app16}
\end{align}
As $g$ is convex
and $f$ is Lipschitz-smooth,
$F$ is continuous,
then
\begin{align*}
\lim_{k_j \rightarrow \infty}
F(x_{k_j}) = F(x_*).
\end{align*} 
Thus, $x_*$ is also a critical point of \eqref{eq:compfunc} (Lemma~\ref{lem:prox}).

\subsection{Proposition~\ref{pr:inextpx}}

Let $v^*_{\phi(k)} = \Px{\eta g}{v_{\phi(k)} - \eta \nabla f(v_{\phi(k)})}$.
Note that $h_{\eta g}(x)$ is $1$-strongly convex and $0 \in \partial h_{\eta g}(v^*_{\phi(k)})$, 
thus
\begin{align*}
h(x_{\phi(k) + 1})
\ge h(v^*_{\phi(k)}) + \frac{1}{2}\NM{x_{\phi(k) + 1} - v^*_{\phi(k)}}{2}^2.
\end{align*}
As $h(x_{\phi(k) + 1}) - h(v^*_{\phi(k)}) \le \varepsilon_{\phi(k)}$,
thus 
$\NM{e_{\phi(k)}}{2}^2 
\equiv \NM{x_{k + 1} - v^*_{\phi(k)}}{2}^2 \le 2 \varepsilon_{\phi(k)}$.

\subsection{Proposition~\ref{pr:ietcvx}}

As
$\Gm{ v_{\phi(k)} } + e_k = x_{\phi(k) + 1} - v_{\phi(k)}$, 
thus 
\begin{align*}
\NM{\Gm{ v_{\phi(k)} }}{2}^2 
\le 2 \left( \NM{x_{\phi(k) + 1} - v_{\phi(k)}}{2}^2 + \NM{e_k}{2}^2 \right).
\end{align*}
Combining Proposition~\ref{pr:inextpx} and \eqref{eq:app11}, we have
\begin{align*}
\lim_{k \rightarrow \infty} \NM{ \Gm{v_{\phi(k)}} }{2}^2 = 0.
\end{align*}
Besides, using \eqref{eq:app7} and $\sum_{k}^{\infty} \varepsilon_k \le c$
\begin{align}
& \min_{k = 1, \dots, K}
\NM{\Gm{ v_{\phi(k)} }}{2}^2 
\notag \\
& \le \frac{2}{K} \sum_{k = 1}^K \left( \NM{x_{\phi(k) + 1} - v_{\phi(k)}}{2}^2 + \NM{e_k}{2}^2 \right)
\notag \\
& \le \frac{2}{K} \sum_{k = 1}^K \left( \NM{x_{\phi(k) + 1} - v_{\phi(k)}}{2}^2 + 2 \varepsilon_{\phi(k)} \right) 
\notag \\
&
\le \frac{2}{K} \left[ 4 c + \frac{1}{\rho - L} \sum_{t = 0}^{q}
\left( F(x_1) - \inf F + \rho c \right) \right].
\label{eq:app18}
\end{align}
As $c_1 = \max_{t = 1, \dots, q+1}F(x_t) - \inf F$, then \eqref{eq:app18} can be simplified as
\begin{align*}
\min_{k = 1, \dots, K}
\NM{\Gm{ v_{\phi(k)} }}{2}^2 
\le \frac{2}{K} \left[ 4 c + \frac{(q + 1)(c_1 + \rho c)}{\rho - L} \right] .
\end{align*}


\subsection{Theorem~\ref{the:temp1}}

Similar as Proposition~\ref{pr:app1},
we can obtain Proposition~\ref{pr:app3} here,
of which proof can also be done following Proposition~\ref{pr:app1}.

\begin{proposition} \label{pr:app3}
	$\Delta_{k + q + 1}
	\le \Delta_k
	- \frac{\delta}{2} \NM{x_{\phi(k) + 1} - v_{\phi(k)}}{2}^2$.
\end{proposition}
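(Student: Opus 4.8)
The plan is to mirror the telescoping induction already carried out for Proposition~\ref{pr:app1}, with two substitutions: the exact sufficient-decrease bound of Proposition~\ref{pr:suffdesc} is replaced by the inexact condition~\eqref{eq:sheother}, and the constant $\rho - L$ is replaced by $\delta$. First I would establish the one-step bound $F(x_{k+1}) \le \Delta_k - \frac{\delta}{2}\NM{x_{k+1} - v_k}{2}^2$. Condition~\eqref{eq:sheother} already gives $F(x_{k+1}) \le F(v_k) - \frac{\delta}{2}\NM{x_{k+1} - v_k}{2}^2$, so it suffices to show $F(v_k) \le \Delta_k$. This follows from the branching at step~5 of Algorithm~\ref{alg:ours}: if $F(y_k) \le \Delta_k$ then $v_k = y_k$ and $F(v_k) = F(y_k) \le \Delta_k$; otherwise $v_k = x_k$ and $F(v_k) = F(x_k) \le \Delta_k$, since $\Delta_k$ is the maximum of the last $q+1$ objective values and hence at least $F(x_k)$. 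In both cases $F(v_k) \le \Delta_k$, whence $F(x_{k+1}) \le \Delta_k$.

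Next I would run the same induction as in Proposition~\ref{pr:app1} over $t = 0, \dots, q$ to obtain $F(x_{k+1+t}) \le \Delta_k - \frac{\delta}{2}\NM{x_{k+1+t} - v_{k+t}}{2}^2$. The inductive step rests on the window-sliding bound $\Delta_{k+j} \le \max(F(x_{k+j}), \Delta_{k+j-1})$: the maximum defining $\Delta_{k+j}$ runs over indices whose earlier part is contained in the window of $\Delta_{k+j-1}$, so combining this with the already-established $F(x_{k+j}) \le \Delta_k$ and $\Delta_{k+j-1} \le \Delta_k$ keeps every $\Delta_{k+j} \le \Delta_k$, while condition~\eqref{eq:sheother} supplies the decrease term at each inner index.

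Finally, recalling $\Delta_{k+q+1} = \max_{t=k+1,\dots,k+q+1} F(x_t)$, the induction yields $\Delta_{k+q+1} \le \Delta_k - \frac{\delta}{2}\min_{t=k,\dots,k+q}\NM{x_{t+1} - v_t}{2}^2$, and setting $\phi(k) = \arg\min_{t=k,\dots,k+q}\NM{x_{t+1} - v_t}{2}^2$ gives the claimed inequality. The main obstacle, relative to the exact case, is the extra verification that $F(v_k) \le \Delta_k$: in Proposition~\ref{pr:suffdesc} the descent is already phrased against $\min(F(y_k), \Delta_k)$, whereas condition~\eqref{eq:sheother} only controls the decrease relative to $F(v_k)$, so the step-5 branching must be invoked explicitly to connect $F(v_k)$ to the nonmonotone reference value $\Delta_k$ before the telescoping can proceed. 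Everything downstream of this connecting step is then identical in form to the proof of Proposition~\ref{pr:app1}.
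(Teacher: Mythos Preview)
Your proposal is correct and is exactly the approach the paper has in mind: the paper simply states that the proof ``can also be done following Proposition~\ref{pr:app1}'' with the obvious substitution of condition~\eqref{eq:sheother} and $\delta$ for Proposition~\ref{pr:suffdesc} and $\rho-L$. You have additionally made explicit the one connecting step the paper leaves implicit, namely that the branching at step~5 forces $F(v_k)\le\Delta_k$, which is precisely the case analysis already used inside the proof of Proposition~\ref{pr:suffdesc}.
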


Summing over $k = 1, \dots, K$, then
\begin{align}
\sum_{t = 0}^{q}
\Delta_{t + 1} - \Delta_{K + t - q + 1}
\ge \frac{\delta}{2} \sum_{k = 1}^K \NM{x_{\phi(k) + 1} - v_{\phi(k)}}{2}^2.
\label{eq:app8}
\end{align}
As $\inf F > - \infty$,
let $K \rightarrow \infty$,
then 
\begin{align}
\sum_{t = 0}^{q}
\Delta_{t + 1} - \Delta_{K + t - q + 1}
& \le 
\sum_{t = 0}^{q}
(\Delta_{t + 1} - \inf F)
\label{eq:app9} \\
& \le (q + 1) c_1.
\notag
\end{align}
As $c_1 < \infty$ is a finite number,
from \eqref{eq:app8}, we have
\begin{align}
\lim_{k \rightarrow \infty}
\NM{x_{\phi(k) + 1} - v_{\phi(k)}}{2} = 0,
\label{eq:app13}
\end{align}
which means $\lim_{k \rightarrow \infty} x_{\phi(k) + 1} = v_{\phi(k)}$,
and both sequence $\{ x_k \}$ and $\{ v_k \}$ are bounded
and both have at least one limit point.

\subsection{Corollary~\ref{cor:temp1}}

Let $x_*$ be a limit point,
and $\{x_{k_j}\}$ be a subsequence of $\{ x_k \}$ such that
\begin{align*}
\lim_{k_j \rightarrow \infty} x_{k_j + 1}= \lim_{k_j \rightarrow \infty} v_{k_j} = x_*.
\end{align*}
Next,
we show that
\begin{align}
x_* = \Px{\eta}{x_* - \nabla f(x_*)}.
\label{eq:app12}
\end{align}
We will prove it by establishing contradictory.
Assuming \eqref{eq:app12} does not hold,
then $x_* \neq \Px{\eta}{x_* - \eta \nabla f(x_*)}$,
and we can find another
\begin{align*}
\tilde{x}_* = \lim_{k_j \rightarrow \infty} x_{k_j + 1} = \lim_{k_j \rightarrow \infty} \text{inexact } \Px{\eta g}{z_{k_j}} \neq x_*
\end{align*}
by Assumption~(i), 
which is in contradiction with $\lim_{k_j \rightarrow \infty} x_{k_j + 1} = x_*$.
As a result, 
we have 
\begin{align*}
x_* = \lim_{k_j \rightarrow \infty} x_{k_j + 1}
& = \lim_{k_j \rightarrow \infty} \text{inexact } \Px{\eta g}{z_{k_j}} 
\notag \\
& = \lim_{k_j \rightarrow \infty} \Px{\eta g}{z_{k_j}} 
\notag \\
& = \lim_{k_j \rightarrow \infty} \Px{\eta g}{v_{k_j} - \eta \nabla f(v_{k_j})} 
\notag \\
& = \Px{\eta}{x_* - \eta \nabla f(x_*)}.
\end{align*}
Besides,
by Assumption~(ii) we have
\begin{align*}
\lim_{k_j \rightarrow \infty} F(x_{k_j}) = F(x_*).
\end{align*}
Using Lemma~\ref{lem:prox} and \eqref{eq:app12}, 
$x_*$ is a critical point of \eqref{eq:compfunc}.

\subsection{Proposition~\ref{pr:ietncvx2}}

Using \eqref{eq:app13}, 
we have
\begin{align*}
\lim_{k \rightarrow \infty} \NM{ x_{\phi(k) + 1} - v_{\phi(k)} }{2}^2 = 0.
\end{align*}
Finally, using \eqref{eq:app8} and \eqref{eq:app9},
we have
\begin{align*}
& \min_{k = 1, \dots, K} \sum_{k = 1}^K \NM{x_{\phi(k) + 1} - v_{\phi(k)}}{2}^2
\\
& \le 
\frac{1}{K}\sum_{k = 1}^K \NM{x_{\phi(k) + 1} - v_{\phi(k)}}{2}^2
\\
& \le \frac{2}{\delta K} \sum_{t = 0}^{q}
\left( F(x_1) - \inf F \right)
= \frac{2 (q + 1) c_1}{\delta K}. 
\end{align*}

%

\end{document}